\newcommand{\Par}[1]{\left({#1}\right)}
\newcommand{\Norm}[1]{\left\|{#1}\right\|}
\newcommand{\Prod}[2]{\Par{#1, #2}}
\newtheorem{lemma}{Lemma}
\newtheorem{theorem}{Theorem}
\newcommand{\revision}[1]{\textcolor{black}{#1}}
\newcommand{\inred}[1]{\textcolor{red}{#1}}
\title{Stabilization of isogeometric finite element method with optimal test functions computed from $L_2$ norm residual minimization}
\author{Marcin \L{}o\'s$^1$, \revision{Tomasz S\l{}u\.zalec$^1$}, Maciej Paszy\'nski$^1$, Eirik Valseth$^{2,3,4}$}
\date{$^1$Faculty of Computer Science, AGH University, Krak\'ow, Poland \\ $^2$The Norwegian University of Life Science, Ås, Norway \\ $^3$Simula Research Laboratory, Oslo, Norway   \\ $^4$Oden Institute for Computational Engineering and Sciences, The University of Texas at Austin, USA  }
\begin{document}

\maketitle

\begin{abstract}
We compare several stabilization methods in the context of isogeometric analysis
and B-spline basis functions, 
using an advection-dominated advection\revision{-}diffusion as a model problem.
We derive (1) the least-squares finite element method formulation using the framework of Petrov-Galerkin method
with optimal test functions in the~$L_2$ norm, which guarantee automatic preservation
of the \emph{inf-sup} condition of the continuous formulation.
We also combine it with the standard Galerkin method 
to recover (2) the Galerkin/least-squares formulation,
and derive coercivity constant bounds valid for B-spline basis functions.
The resulting stabilization method are compared with the least-squares and 
(3) the Streamline-Upwind Petrov-Galerkin (SUPG)method using again the Eriksson-Johnson model problem.
The results indicate that least-squares
(equivalent to Petrov-\revision{Galerkin} with $L_2$-optimal test functions)
outperforms the other stabilization methods for small P\'eclet numbers,
while strongly advection-dominated problems are better handled with
SUPG or Galerkin/least-squares.
\end{abstract}

\section{Introduction}

In the finite element method \cite{IGA}, the Bubnov-Galerkin method is often employed to numerically solve the weak formulation of partial differential equations (PDEs). 
For certain challenging problems, the Bubnov-Galerkin method  results in a solution which exhibits nonphysical oscillations. A model example of \revision{such behaviour} is the advection-dominated \revision{advection-}diffusion PDE \cite{Chen,Paszynski}, and a particular case of this PDE called the Eriksson-Johnson problem \cite{Eriksson,Calo}.
To ensure physically meaningful numerical solutions that are also convergent, highly specialized finite element (FE) meshes or numerical stabilization methods are required. Some canonical examples of numerical stabilization for FE methods for these  problems are the Streamline-Upwind Petrov-Galerkin method (SUPG) \cite{SUPG},  residual-minimization methods \cite{MinRes}, least-squares FE methods~\cite{lsfem}, and discontinuous Petrov-Galerkin (DPG) methods~\cite{leszek}. The latter three methods are all closely related and differ mainly in the choice of norm in which the FE residual is evaluated. 
Furthermore, these three methods can also be interpreted as special cases of mixed FE methods, see e.g., \cite{Qiu}, since the saddle-point formulation of the residual minimization becomes a mixed problem.
It is also well-known that the residual minimization method is equivalent to the Petrov-Galerkin (PG) formulation with optimal test functions \cite{Paszynski}.


In this paper, we focus on the stabilization formulations in the context of the isogeometric analysis
and B-spline basis functions on regular grids.
We derive (1) the least-squares finite element method as a variant of the residual minimization / DPG method
taking $L_2$ as the norm in which the residual is being minimized.
Then we combine it with the standard Bubnov-Galerkin test functions to create a stabilized
formulation closely related to (2) the Galerkin/least-squares method~\cite{galerkin-ls},
and investigate its stability properties when B-spline basis functions are used for discretization.
We compare these two stabilized formulations (least\revision{-}squares finite element and Galerkin/least-squares)
numerically with (3) the well-known Streamline-Upwind Petrov-Galerkin (SUPG) method. 
The idea is similar to the work of \cite{NURBS}, where Non-Uniform Rational B-Splines are used to develop least-squares FE computations. Our modified stabilized formulation differs from this least\revision{-}squares approach. This allows us to improve the accuracy of the stabilization and numerical results. 

The structure of the paper is the following.
In \revision{S}ection 2.1 we start from a Bubnov-Galerkin formulation of the \revision{two model advection-diffusion} problems.
In Section 2.2 we
apply the residual minimization method with respect to~$L_2$ inner product, in which the optimal test functions can be solved analytically and results in a least-squares finite element method.
In Sections 2.2.1 and 2.2.2
\revision{we show}
that solving the \revision{advection-diffusion model problems} accurately with a standard 
Bubnov-Galerkin method requires manual refinement of the mesh to resolve the boundary layer. This approach requires \emph{a priori} knowledge of the solution\revision{, which} may not be feasible in the general case.
On the other hand, in Section 2.2.3 we show that the least-squares FEM can handle it using 
a relatively coarse, uniform mesh.
In Section 2.3, we combine the standard Bubnov-Galerkin test functions
with~$L_2$ optimal test functions,
and derive a stabilized discrete formulation resembling Galerkin/least-squares.
In the following Section 2.4, we investigate the performance of the scheme numerically
 and compare it with the SUPG method.
 Finally, Section 3 derives the stability results for the mixed Galerkin/ least\revision{-}squares formulations.
The observations and conclusions from our experiments
as well as some future research directions are summarized in Section 4.

\section{Introduction of the methods}

To commence our discussion, we will introduce  three computational methods for the \revision{two benchmarks using the advection-diffusion equations, numerically unstable due to the fact that $\|\beta \| \gg \epsilon $.}

\subsection{\revision{Model problems}}

\revision{Given the unit square domain $\Omega=(0,1)^2$, a convection vector $\beta=(\beta_x,\beta_y)$, we seek the solution of the advection-diffusion equation
\begin{equation}
\label{eq:advection}
\beta_x\frac{\partial u}{\partial x}+\beta_y\frac{\partial u}{\partial x}-\epsilon \left(\frac{\partial^2 u}{\partial x^2}+
\frac{\partial^2 u}{\partial y^2}\right)=0, 
\end{equation}
along with the Dirichlet boundary conditions
$$
u(x,y)=g(x,y) \textrm{ for }(x,y) \in \partial \Omega,
$$
where 
$$
\partial \Omega = \{ (x,y): x \in \{0,1\} y \in (0,1)\} \cup \{(x,y): y \in \{ 0,1 \}, x\in (0,1)\} 
$$ 
is the boundary of $\Omega$.}

\subsubsection{\revision{First model problem}}

\revision{We focus on a model advection–diffusion problem defined as follows. For a unitary square domain $\Omega=(0,1)^2$,  the convection vector $\beta=(1,1)$,
the diffusion coefficient $\epsilon = \{ 0.1, 0.01, 0.003\}$, we seek the solution of the advection-diffusion equation (\ref{eq:advection}), with zero Dirichlet boundary conditions $g(x,y)=0$. This problem develops boundary layers at $x=1$ and $y=1$, and the sharp maximum at point $(1,1)$. This is a very difficult computational problem, for advection $\|\beta\|$ dominating the diffusion  $\epsilon$, it is very difficult to recover the correct shape of the solution at point $(1,1)$. 
The exact solution for this problem is given by
\begin{equation}
u_{exact}(x_1,x_2)=
(x_1 + \frac{\exp(\frac{x_1}{\epsilon})-1}{1-\exp(\frac{1}{\epsilon})})(x_2 + \frac{\exp(\frac{x_2}{\epsilon}-1)}{1-\exp(\frac{1}{\epsilon})})
\label{eq:first}
\end{equation}
The exact solution for $\epsilon \in \{0.1, 0.01, 0.003 \}$ are presented in Figure \ref{fig:first}. For smaller values of $\epsilon$, the numerical evaluation of (\ref{eq:first}) breaks.}

\begin{figure}
    \centering
    \includegraphics[width=0.9\linewidth]{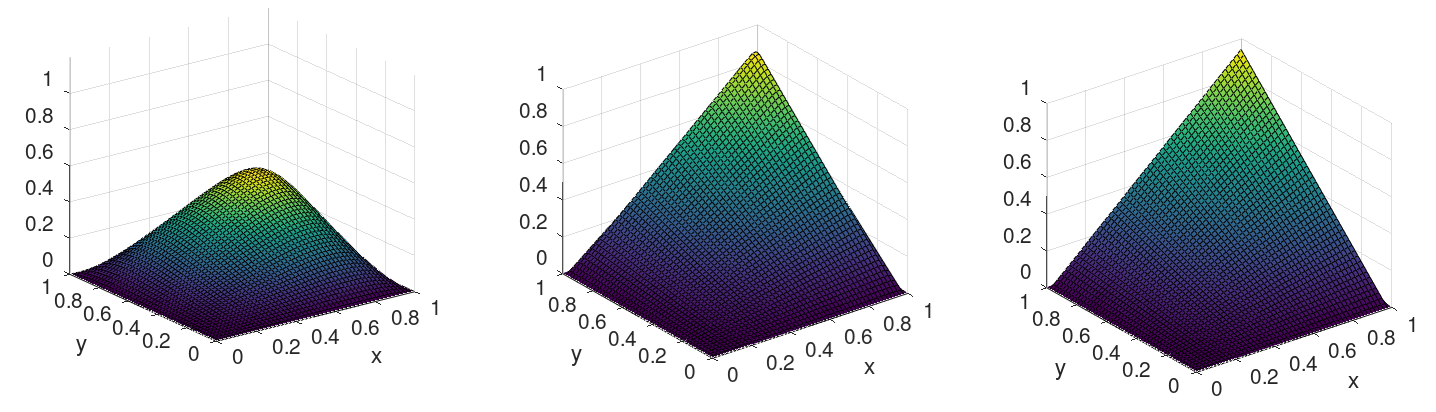}
    \caption{\revision{The exact solutions for the first problem for $\epsilon \in \{0.1, 0.01, 0.003\}$}}
    \label{fig:first}
\end{figure}

\subsubsection{\revision{Second model problem}}

\revision{We consider the famous version of the advection-diffusion problem, the Eriksson-Johnson model problem~\cite{Eriksson}.
On the unit square domain $\Omega=(0,1)^2$, we define a convection vector $\beta=(\beta_x,\beta_y)=(1,0)$, 
along with  Dirichlet boundary conditions:
$$
g(x,y)=0 \textrm{ for }x\in(0,1),y\in\{0,1\} \qquad
g(x,y)=sin(\pi y) \textrm{ for }x=0
$$
The problem is driven by the inflow Dirichlet boundary condition and develops a boundary layer of width $\epsilon$ at the outflow $x = 1$.
The exact solution is given by
\begin{equation}
u(x_1,x_2)=\frac{\exp(r_1(x_1-1))-\exp(r_2(x_1-1))}{\exp(-r_1)-\exp(-r_2)}\sin(\pi x_2)
\end{equation}
where $r_1=\frac{1+\sqrt{1+4\epsilon^2\pi^2}}{2\epsilon}$, $r_2=\frac{1-\sqrt{1+4\epsilon^2\pi^2}}{2\epsilon}$, see \cite{Eriksson}.
The exact solutions for $\epsilon \in \{0.01,0.001,0.0001\}$ are presented in Figure \ref{fig:second}.
}

\begin{figure}
    \centering
    \includegraphics[width=0.9\linewidth]{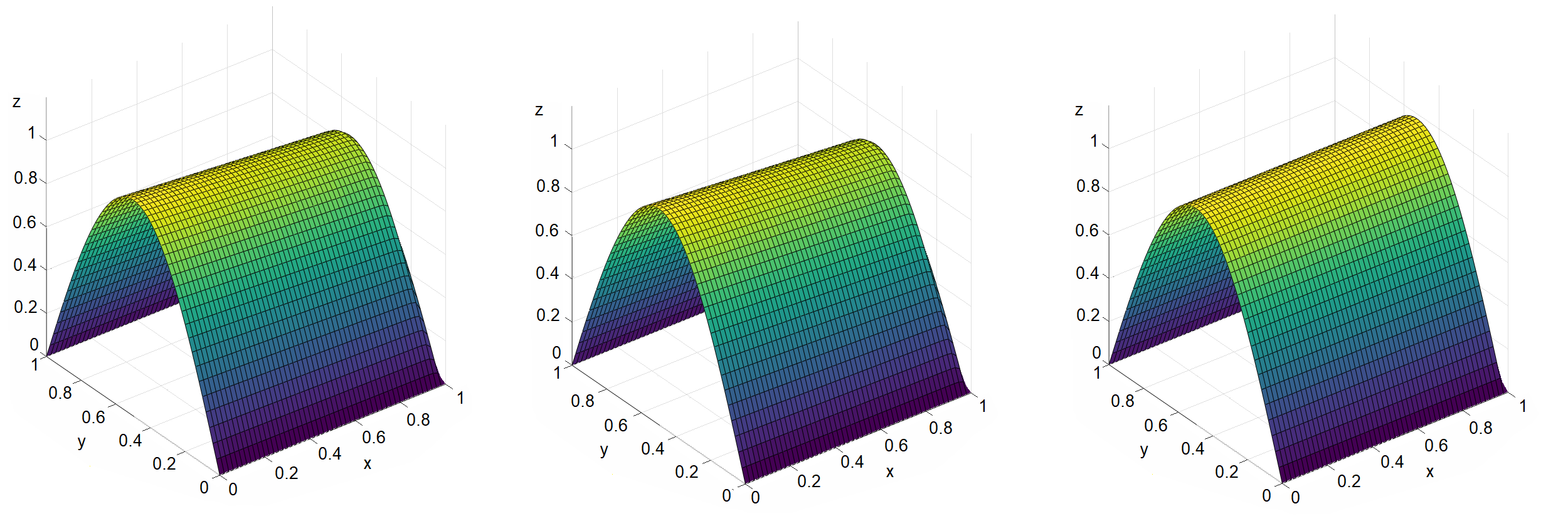}
    \caption{\revision{The exact solutions for the second problem for $\epsilon \in \{0.01, 0.001, 0.0001\}$}}
    \label{fig:second}
\end{figure}

\subsection{Bubnov-Galerkin method for \revision{advection-difussion problems}}

 We develop the standard weak formulation for the \revision{advection-diffusion model} problem:
Find $u \in H^1_g(\Omega)$
\begin{eqnarray}
\begin{aligned}
\left(\frac{\partial u}{\partial x},v\right)_{\Omega}+\epsilon \left(  \frac{\partial u}{\partial x}, \frac{\partial v}{\partial x}\right)_{\Omega} +\epsilon \left(  \frac{\partial u }{\partial y}, \frac{\partial v}{\partial y}\right)_{\Omega} = 0, \, \forall v \in H^1_0(\Omega).
\label{eq:ModelProblem_b}
\end{aligned}
\end{eqnarray}

\subsection{Least-squares finite element method
for \revision{advection-diffusion} model problem}

Now, we present the discrete weak form of (\ref{eq:ModelProblem_b}) in the spirit of the least-squares finite element method
using optimal test functions. Hence, we seek $u_h \in U^h \subset C^2\left(\Omega\right)$ such that:
\begin{eqnarray}
\begin{aligned}
B(u_h,v_h)=&
\left( 
 \frac{\partial u_h}{\partial x}
-\epsilon   \frac{\partial^2 u_h}{\partial x^2}
-\epsilon  \frac{\partial^2 u_h}{\partial y^2}
,v_h \right)
= 0, \quad \forall v_h \in \revision{V_h} 
\label{eq:ModelProblem_b2}
\end{aligned}
\end{eqnarray}
\revision{where $V_h \subset L_2(\Omega)$ to be determined.}
For the advection-dominated \revision{advection-}diffusion problem, stability in the Bubnov-Galerkin setup is not guaranteed. 
\revision{It means, that the numerical solution may not converge to the exact solution when we increase accuracy by refining the mesh.}
The solution we rely on to ensure stability  is to apply a Petrov-Galerkin method with optimal test functions. To this end, we define a set of optimal test functions for each trial function that are the solutions of auxiliary weak problems. 

The approximate solution $u_h$ is a linear combination of the trial basis functions, and some unknown coefficients $u_h^i$:
\begin{equation}
u_h = \sum_{i}u^i_h e^i,
\end{equation}
where $e^i$ are the basis functions. Following the philosophy of optimal testing in the DPG method, for each basis function $e^i$, there exists an optimal test function that can be computed in the following way:

Find \revision{$\hat{e}^i \in L_2\left(\Omega\right)$} such that 
\begin{equation}
\left(\hat{e}^i,\psi\right)_{\Omega} = B\left(e^i,\psi\right) \quad \forall \psi \in \revision{L_2\left(\Omega\right)},
\label{eq:optimall}
\end{equation}
\revision{Since~$L_2(\Omega)$ is a Hilbert space
and~$\psi \mapsto B(e_i, \psi)$ defines a (continuous) linear functional on~$L_2(\Omega)$,
existence and uniqueness of such~$\hat{e}_i$ is guaranteed by the Riesz representation theorem.}
Note that there is one problem (\ref{eq:optimall}) for each trial basis function in $U_h$. 

Continuing this reasoning, 
we have:
\begin{equation}
\left( \hat{e}^i,\psi \right)_{\Omega} = \left( 
 \frac{\partial \revision{e^i}}{\partial x}
-\epsilon   \frac{\partial^2 \revision{e^i}}{\partial x^2}
-\epsilon  \frac{\partial^2 \revision{e^i}}{\partial y^2}
,\psi\right)_{\revision{\Omega}} \quad \forall \psi \in \revision{L_2(\Omega)} 
\end{equation}
which means that:
\begin{equation}
\left(\hat{e}^{i}-\left[ \frac{\partial \revision{e^i}}{\partial x}
-\epsilon   \frac{\partial^2 \revision{e^i}}{\partial x^2}
-\epsilon  \frac{\partial^2 \revision{e^i}}{\partial y^2}\right]
,\psi\right)_{\Omega} = 0\quad \forall \psi \in \revision{L_2(\Omega)} 
\label{eq:l2}
\end{equation}
Since (\ref{eq:l2}) holds for all $\psi$ we must conclude from the Fourier Lemma that:
\begin{equation}
\hat{e}^{i}-\left[ \frac{\partial \revision{e^i}}{\partial x}
-\epsilon   \frac{\partial^2 \revision{e^i}}{\partial x^2}
-\epsilon  \frac{\partial^2 \revision{e^i}}{\partial y^2}
\right]=0
\end{equation}
\begin{equation}
\hat{e}^{i}= \frac{\partial \revision{e^i}}{\partial x}
-\epsilon   \frac{\partial^2 \revision{e^i}}{\partial x^2}
-\epsilon  \frac{\partial^2 \revision{e^i}}{\partial y^2}
\end{equation}

The optimal test function for the trial basis function $e^i$ is the action of the differential operator onto the basis function from the trial space. Since this operation involves second-order derivatives, higher-order discretization, like the isogeometric analysis, is needed.
Since we now have an explicit (analytic) relationship for each optimal test function, we can show that we have unconditional stability. The adjusted weak form with optimal testing is the least-squares weak form:

Find $u_h\in U_h$:
\begin{eqnarray}
\begin{aligned}
& \left(\frac{\partial u_h}{\partial x}
-\epsilon   \frac{\partial^2 u_h}{\partial x^2}
-\epsilon  \frac{\partial^2 u_h}{\partial y^2}
,\underbrace{ \frac{\partial v_h}{\partial x}
-\epsilon   \frac{\partial^2 v_h}{\partial x^2}
-\epsilon  \frac{\partial^2 v_h}{\partial y^2}
}_{w_h}\right)_{\Omega} = \\ & = \left(f,\underbrace{ \frac{\partial v_h}{\partial x}
-\epsilon   \frac{\partial^2 v_h}{\partial x^2}
-\epsilon  \frac{\partial^2 v_h}{\partial y^2}
}_{w_h}\right)_{\Omega} \quad \forall v_h \in U^h
\label{eq:weak2}
\end{aligned}
\end{eqnarray}
Notice that in (\ref{eq:weak2}) we have $v_h \in U_h$. 
The coercivity of the formulation follows trivially from this choice of test functions and can also be found for general least-squares finite element methods in, e.g.,~\cite{lsfem}.

\subsubsection{Bubnov-Galerkin method on a manually adapted grid}

As a first numerical experiment to highlight the strength of mesh adaptation, we solve this model problem on a manually refined grid using quadratic \revision{$C^1$ (no inner knot value repetition)} B-splines. They are defined with the following knot vectors and points:

{\tt knot\_x  = [0 0 0 1 2 3 4 5 6 7 8 9 10 11 12 13 14 15 16 17 18 19 20 21 22 23 24 25 26 26 26];}

{\tt points\_x = [0 0.5 0.75 0.875 0.9375 0.96875 0.984375 0.9921875 0.99609375 0.998046875 0.9990234375 0.9995117188 0.9997558594 0.9998779297 0.9999389648 0.9999694824 0.9999847412 0.9999923706 0.9999961853 0.9999980927 0.9999990463 0.9999995232 0.9999997616 0.9999998808 0.9999999404 0.9999999702 1];}

{\tt knot\_y = [0 0 0 1 2 3 4 5 6 7 8 9 10 11 12 13 14 15 16 17 18 19 20 21 22 23 24 25 26 26 26];}

{\tt points\_y = [0 0.5 0.75 0.875 0.9375 0.96875 0.984375 0.9921875 0.99609375 0.998046875 0.9990234375 0.9995117188 0.9997558594 0.9998779297 0.9999389648 0.9999694824 0.9999847412 0.9999923706 0.9999961853 0.9999980927 0.9999990463 0.9999995232 0.9999997616 0.9999998808 0.9999999404 0.9999999702 1];}

\begin{figure}
\centering
\includegraphics[scale=0.2]{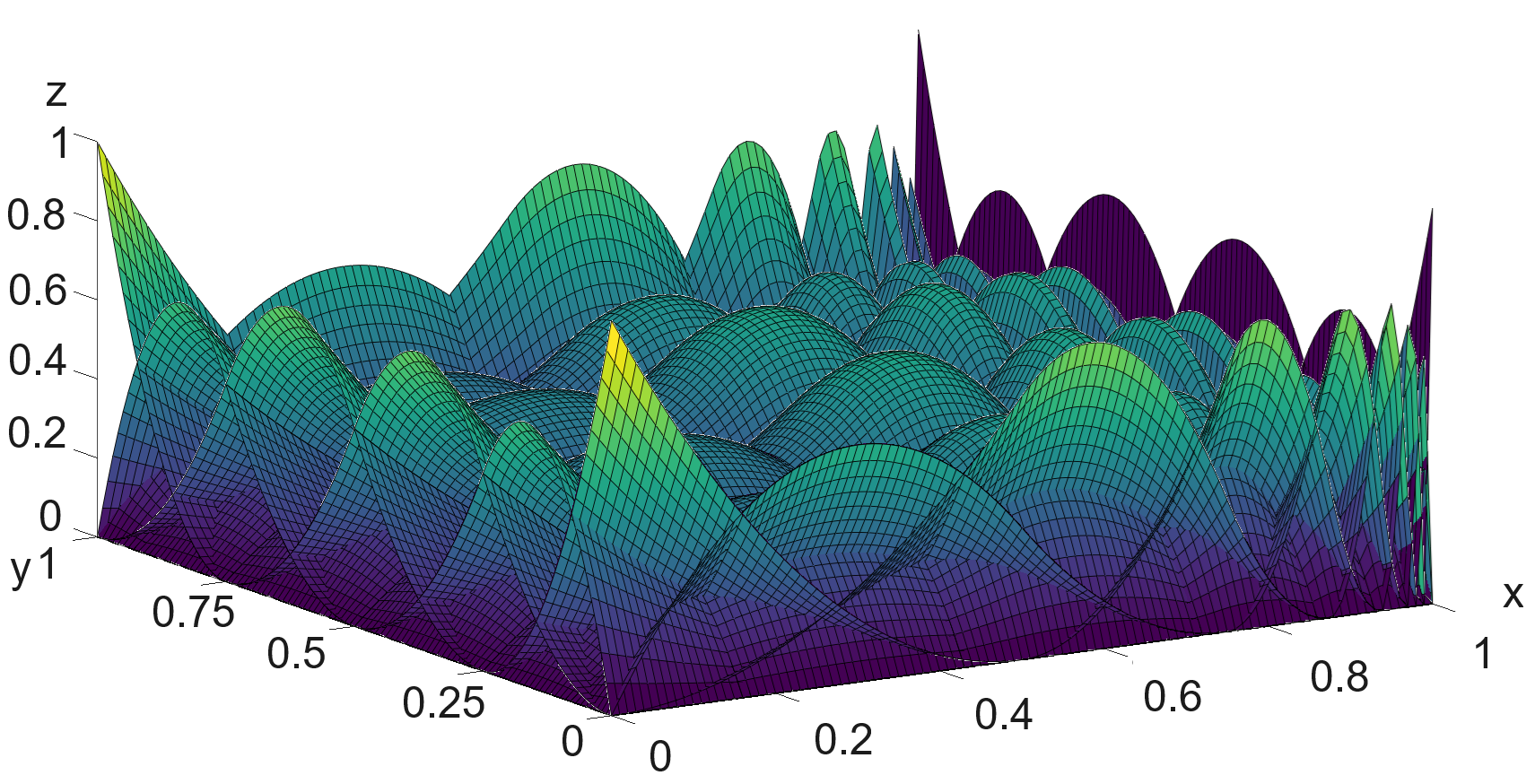}
\caption{\revision{B-spline basis functions for non-uniform grid for the Eriksson-Johnsons problem.
}}
\label{fig:nonuniform}
\end{figure}

\revision{The basis functions along $x$ axis are obtained by introducing knot points $\xi_i = points\_x[knot\_x[i]+1]$ into the recursive formula (\ref{eq:bsplines}),
\begin{eqnarray}
\begin{aligned}
    B_{i,0}(\xi) & =1 \textrm{ if } \xi_i \leq \xi \leq \xi_{i+1}, \textrm{ otherwise } 0, \\
    B_{i,p}(\xi) &=\frac{\xi - \xi_i}{\xi_{i+p}-\xi_i} B_{i,p-1}(\xi) + \frac{\xi_{i+p+1}-\xi}{\xi_{i+p+1}-\xi_i} B_{i+1,p-1}(\xi) 
\end{aligned}
\label{eq:bsplines}
\end{eqnarray}
for the order $p$ defined as the number of repetitions of the first $knot\_x[1]$ minus one, assuming that the subsequent knots inserted into the denominator must be different, and if they are not different, then the given term is changed to zero. The basis functions along the $y$ axis are constructed in a similar way.}

\revision{For the first model problem, we define two-dimensional basis functions as a tensor product of these basis functions. For the second numerical problem, we take the tensor product of basis functions defined by {\tt knot\_x} and {\tt points\_x} with the basis functions defined as}

{\tt knot\_y  = [0 0 0 1 2 3 4 4 4];} {\tt points\_y = [0 0.25 0.5 0.75 1];}

\revision{We perform numerical  experiments for different diffusion parameters: $\epsilon \in \{0.1,0.01,0.003\}$ for the first model problem and $\epsilon \in \{0.01,0.001,0.0001\}$ for the second problem.
We compare the numerical solutions for these parameters with the corresponding exact solutions in Figures \ref{fig:Galerkin001A}-\ref{fig:Galerkin001}.
We summarize the relative $L_2$ and $H^1$ errors in Table \ref{tab:tab1} and notice that the manually adapted grid allows for an accurate solution of this problem independent of the diffusivity.}

\begin{figure}
\centering
\includegraphics[scale=0.3]{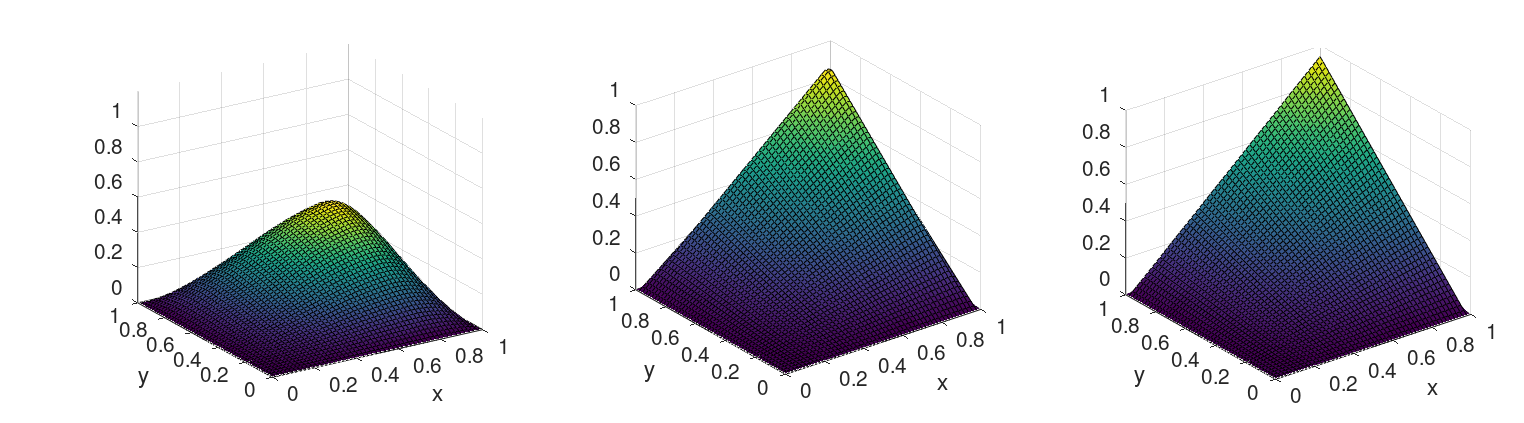}
\caption{\revision{The first model problem, for $\epsilon \in \{ 0.1, 0.01, 0.003 \}$. The solution of the Bubnov-Galerkin problem on a manually refined grid.}
}
\label{fig:Galerkin001A}
\end{figure}

\begin{figure}
\centering
\includegraphics[scale=0.3]{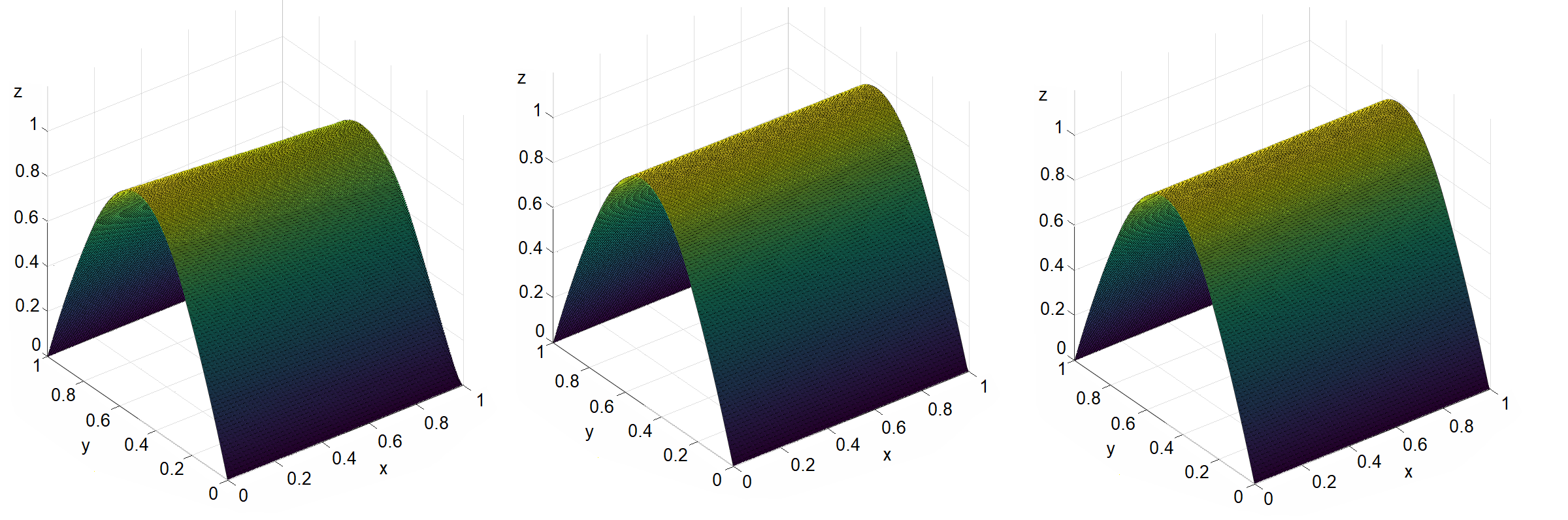}
\caption{\revision{The second model problem, Eriksson-Johnson for $\epsilon \in \{0.01, 0.001, 0.0001 \}$. The solution of the Bubnov-Galerkin problem on a manually refined grid.}
}
\label{fig:Galerkin001}
\end{figure}

\begin{center}
\begin{table}[h!]
\begin{tabular}{  | c |c | c|} 
\hline
 $\epsilon$ &  100$\frac{\|u-u_{exact}\|_{L_2}}{\|u_{exact}\|_{L_2}}$ & 100$\frac{\|u-u_{exact}\|_{H^1}}{\|u_{exact}\|_{H^1}}$ \\
  \hline
$\epsilon=0.1$ & 0.49 & 2.60 \\ 
$\epsilon=0.01$ & 0.12 & 2.30 \\ 
$\epsilon=0.003$ & 0.07 & 2.32 \\ 
  \hline
\end{tabular}
\caption{\revision{The first model problem. Numerical accuracy of solution of the Bubnov-Galerkin method on a manually refined grid.}}
\label{tab:tab1A}
\end{table}
\end{center}

\begin{center}
\begin{table}[h!]
\begin{tabular}{  | c |c | c|} 
\hline
 $\epsilon$ &  100$\frac{\|u-u_{exact}\|_{L_2}}{\|u_{exact}\|_{L_2}}$ & 100$\frac{\|u-u_{exact}\|_{H^1}}{\|u_{exact}\|_{H^1}}$ \\
  \hline
$\epsilon=0.01$ & 0.3 & 2.30 \\ 
$\epsilon=0.001$ & 0.27 & 2.29 \\ 
$\epsilon=0.0001$ & 0.27 & 2.29 \\ 
  \hline
\end{tabular}
\caption{\revision{The second model problem, the} Eriksson-Johnson problem. Numerical accuracy of solution of the Bubnov-Galerkin method on a manually refined grid.}
\label{tab:tab1}
\end{table}
\end{center}

\subsubsection{Bubnov-Galerkin method on a uniform grid}

Next, to highlight the issues when using the Bubnov-Galerkin method on uniform meshes, we solve this problem using a uniform coarse mesh. Hence, we solve this problem without  using our pre-existing knowledge about the solution behaviour.
We again use quadratic~\revision{$C^1$} B-splines defined by the knot vectors and points:

\revision{{\tt  knot\_x  = [0 0 0 1 2 3 4 5 6 7 8 9 10 10 10];}}

\revision{{\tt points\_x = [0 0.1 0.2 0.3 0.4 0.5 0.6 0.7 0.8 0.9 1];}}

\revision{{\tt  knot\_y  = [0 0 0 1 2 3 4 5 6 7 8 9 10 10 10];}}

\revision{{\tt points\_y = [0 0.1 0.2 0.3 0.4 0.5 0.6 0.7 0.8 0.9 1];}}

\revision{for the first problem and}

\revision{{\tt  knot\_x  = [0 0 0 1 2 3 4 5 6 7 8 9 10 10 10];}}

\revision{{\tt points\_x = [0 0.1 0.2 0.3 0.4 0.5 0.6 0.7 0.8 0.9 1];}}

\revision{{\tt  knot\_y  = [0 0 0 1 2 3 4 4 4]; }}

\revision{{\tt  points\_y = [0 0.25 0.5 0.75 1];}}

\revision{for the second problem.}

\revision{They define the B-splines on the uniform mesh illustrated in Figure \ref{fig:uniform}.}

\begin{figure}
\centering
\includegraphics[scale=0.2]{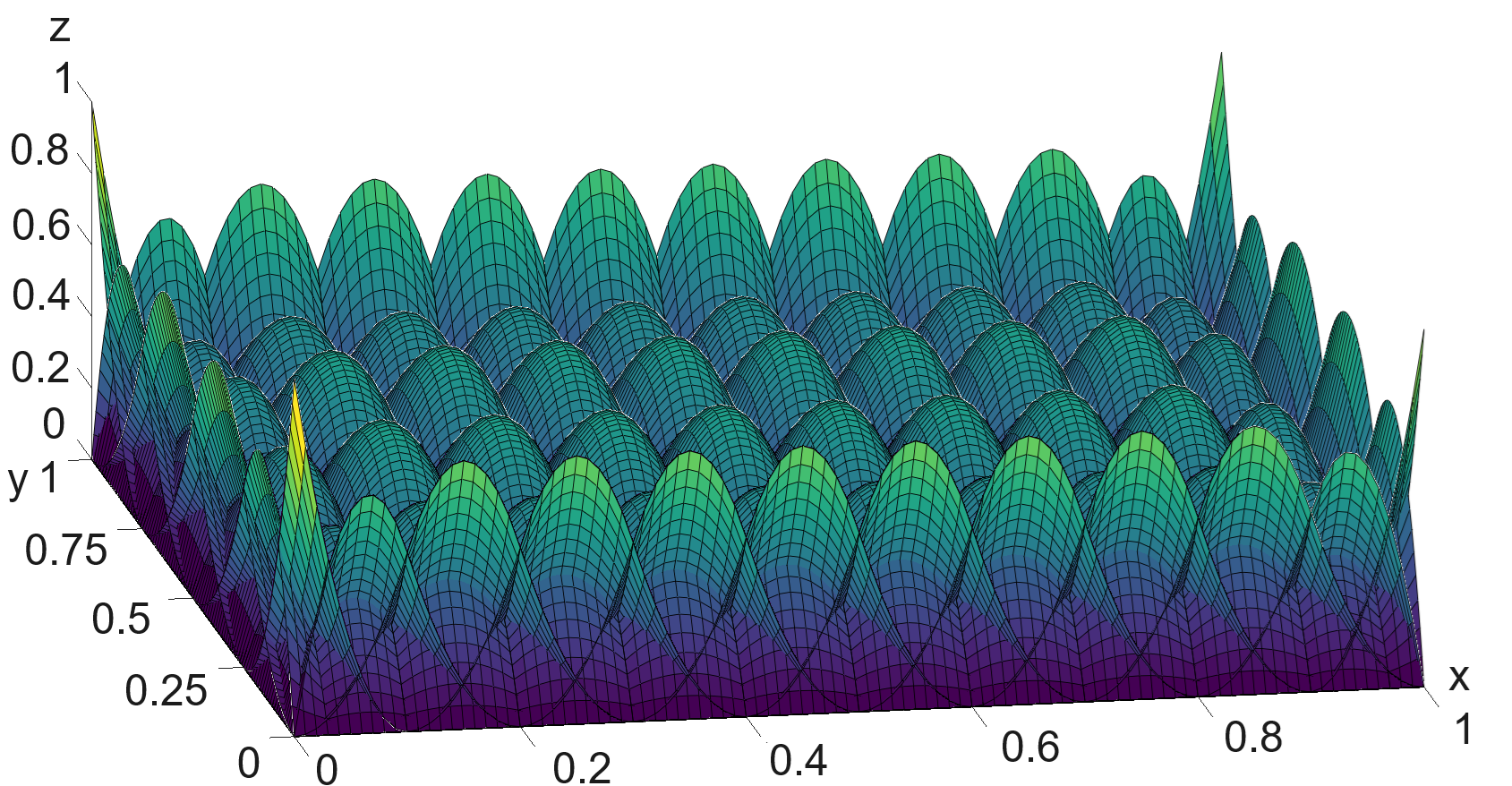}
\caption{\revision{B-spline basis functions for uniform grid for the Eriksson-Johnson problem.}}
\label{fig:uniform}
\end{figure}

\revision{We present the corresponding numerical results obtained for different diffusion parameters $\epsilon \in \{0.1,0.01,0.003\}$ for the first problem, and 
$\epsilon \in \{0.01,0.001,0.0001\}$ for the second problem.}
We present the numerical solutions alongside the exact solutions in Figures \ref{fig:GalerkinUniform01A}-\ref{fig:GalerkinUniform01}.
We summarize the results in Table \ref{tab:tab2} and clearly notice that the Bubnov-Galerkin method does not perform well on a uniform coarse mesh.

\begin{figure}
\centering
\includegraphics[scale=0.6]{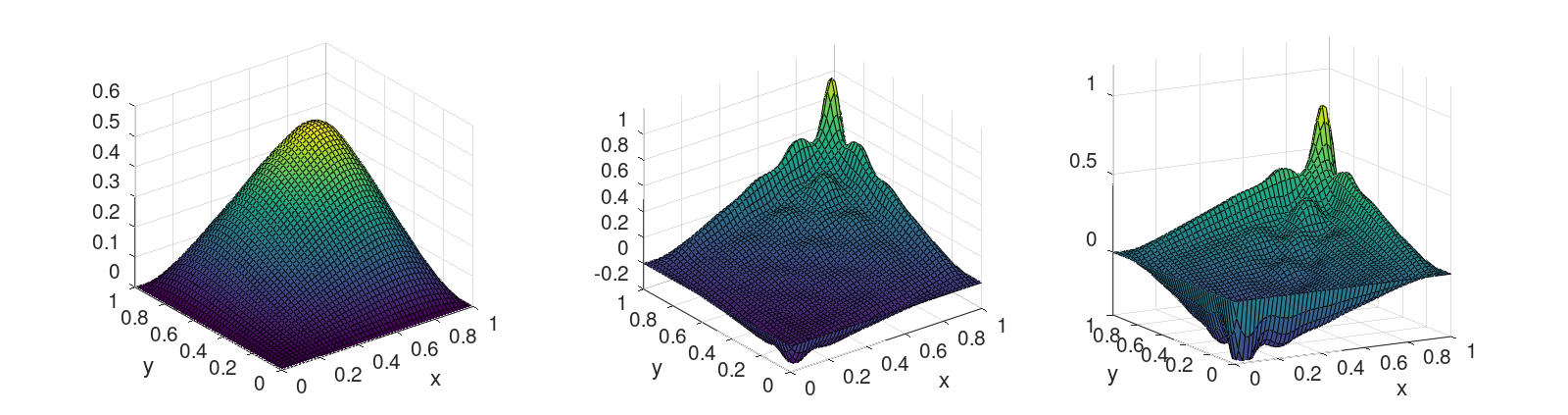}
\caption{\revision{The first model problem for $\epsilon \in \{0.1, 0.001, 0.0003\}$. The solution of the Bubnov-Galerkin problem on a uniform grid.}
}
\label{fig:GalerkinUniform01A}
\end{figure}

\begin{figure}
\centering
\includegraphics[scale=0.3]{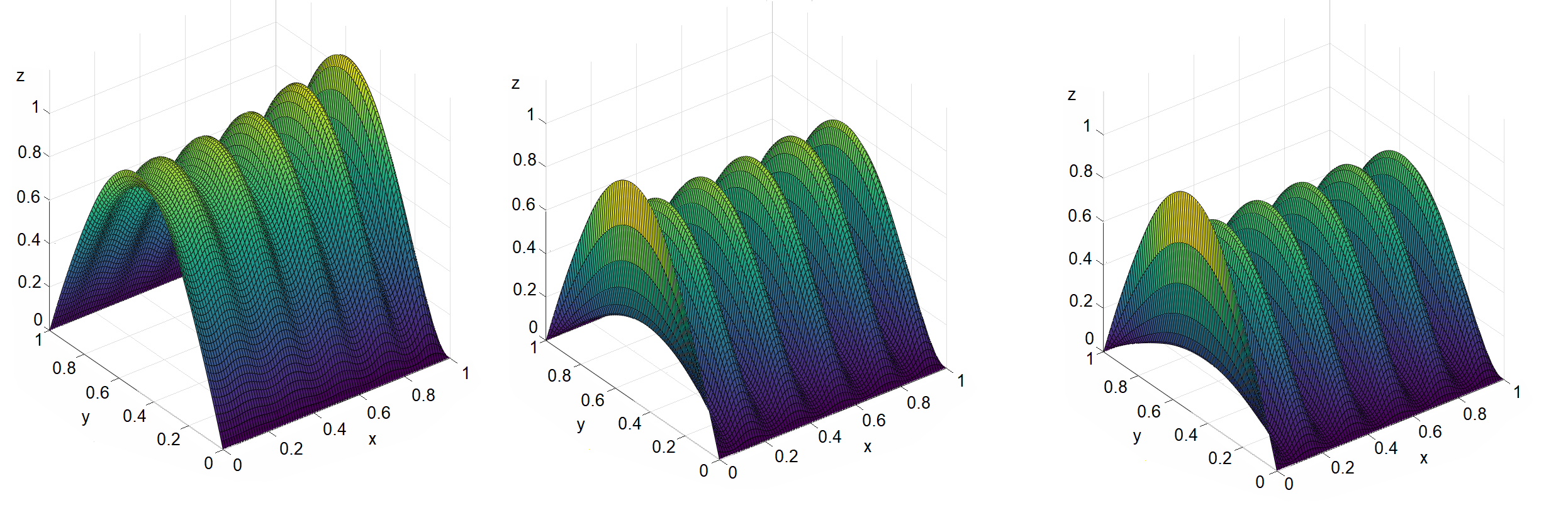}
\caption{\revision{The second model problem, Eriksson-Johnson for $\epsilon \in \{0.01,0.001,0.0001\} $. The solution of the Bubnov-Galerkin problem on a uniform grid.}
}
\label{fig:GalerkinUniform01}
\end{figure}

\begin{center}
\begin{table}
\begin{tabular}{  | c |c | c|} 
\hline
 $\epsilon$ &  100$\frac{\|u-u_{exact}\|_{L_2}}{\|u_{exact}\|_{L_2}}$ & 100$\frac{\|u-u_{exact}\|_{H^1}}{\|u_{exact}\|_{H^1}}$ \\
  \hline
$\epsilon=0.1$ & 0.60  &  4.69 \\ 
$\epsilon=0.01$ &  46.11 &  60.31 \\ 
$\epsilon=0.003$ & 87.17  & 189.62 \\ 
  \hline
\end{tabular}
\caption{\revision{The first model problem. Numerical accuracy of solution of the Bubnov-Galerkin method on a uniform grid.}}
\label{tab:tab2A}
\end{table}
\end{center}

\begin{center}
\begin{table}
\begin{tabular}{  | c |c | c|} 
\hline
 $\epsilon$ &  100$\frac{\|u-u_{exact}\|_{L_2}}{\|u_{exact}\|_{L_2}}$ & 100$\frac{\|u-u_{exact}\|_{H^1}}{\|u_{exact}\|_{H^1}}$ \\
  \hline
$\epsilon=0.01$ & 13.48 & 70.44 \\ 
$\epsilon=0.001$ & 48.15 & 259.75 \\ 
$\epsilon=0.0001$ & 54.77 & 262.14 \\ 
  \hline
\end{tabular}
\caption{\revision{The second model problem, Eriksson-Johnson problem}. Numerical accuracy of solution of the Bubnov-Galerkin method on a uniform grid.}
\label{tab:tab2}
\end{table}
\end{center}

\subsubsection{Petrov-Galerkin method with optimal test functions in $L_2$ setup on a uniform grid} \label{sec:petrov_optimal_eriksson}

\revision{Next, we apply the  Petrov-Galerkin formulation with optimal test functions on a uniform grid. We use the same basis functions and diffusion values as in the previous section. 
We present the corresponding numerical results using this formulation with optimal test functions in the $L_2$ setup in Figures \ref{fig:Eirik01A} -\ref{fig:Eirik01}.}
We can see from these figures, that the solution is indeed stable and less oscillatory than the Bubnov-Galerkin method in the previous section, but  appears overly diffusive for small values of $\epsilon$. In Tables \ref{tab:tab3A}-\ref{tab:tab3}, the corresponding relative errors are summarized.

\begin{figure}
\centering
\includegraphics[scale=0.3]{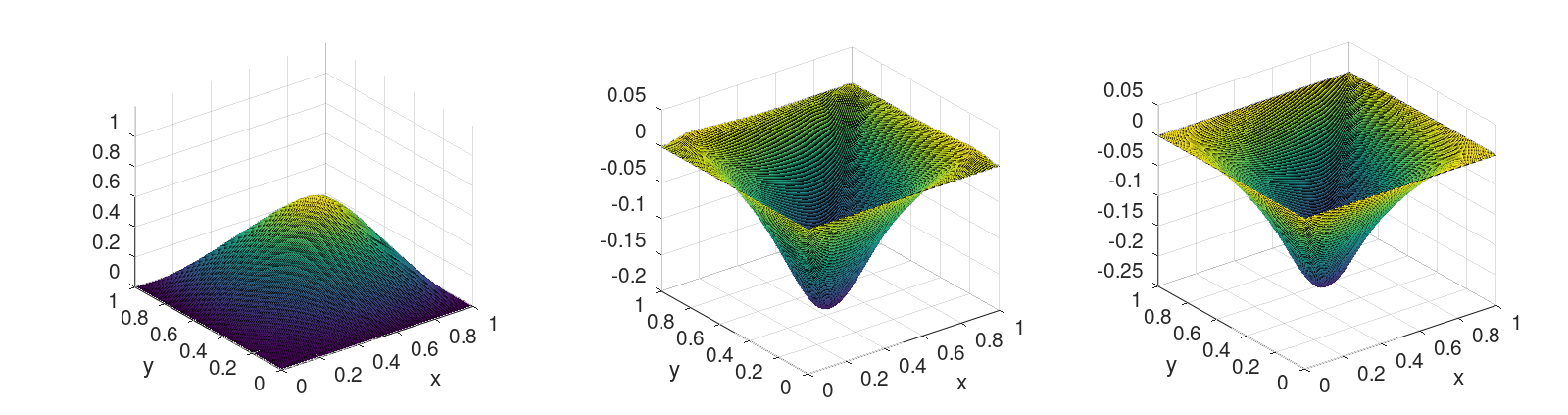}
\caption{\revision{The first model problem for $\epsilon \in \{0.1,0.01,0.003\}$. The solution of the Petrov-Galerkin with optimal test functions in $L_2$ setup.}
}
\label{fig:Eirik01A}
\end{figure}

\begin{figure}
\centering
\includegraphics[scale=0.65]{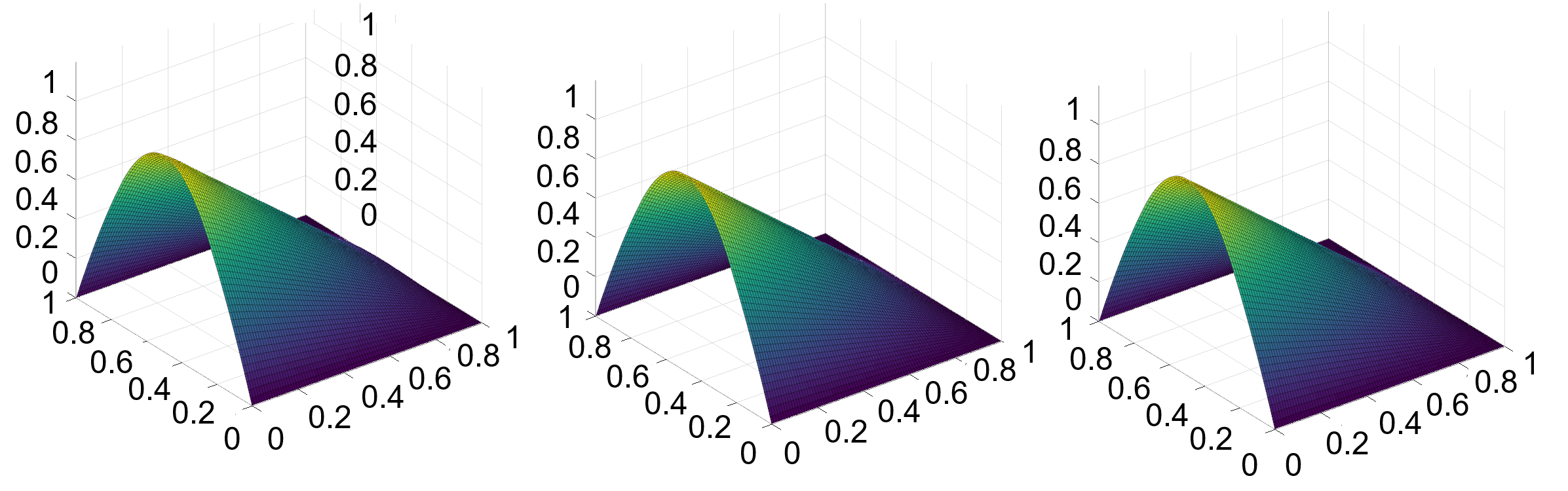}
\caption{\revision{Eriksson-Johnson for $\epsilon \in \{0.01,0.001,0.0001\}$. The solution of the Petrov-Galerkin with optimal test functions in $L_2$ setup.}}
\label{fig:Eirik01}
\end{figure}

\begin{center}
\begin{table}
\begin{tabular}{  | c |c | c|} 
\hline
 $\epsilon$ &  100$\frac{\|u-u_{exact}\|_{L_2}}{\|u_{exact}\|_{L_2}}$ & 100$\frac{\|u-u_{exact}\|_{H^1}}{\|u_{exact}\|_{H^1}}$ \\
  \hline
$\epsilon=0.1$ & 17.27 & 16.61 \\ 
$\epsilon=0.01$ & 86.70 & 100.83 \\ 
$\epsilon=0.003$ & 84.43 & 100.80 \\ 
  \hline
\end{tabular}
\caption{\revision{The first model problem. Numerical accuracy of the solution with the optimal test functions in $L_2$ setup on uniform grid.}}
\label{tab:tab3A}
\end{table}
\end{center}

\begin{center}
\begin{table}
\begin{tabular}{  | c |c | c|} 
\hline
 $\epsilon$ &  100$\frac{\|u-u_{exact}\|_{L_2}}{\|u_{exact}\|_{L_2}}$ & 100$\frac{\|u-u_{exact}\|_{H^1}}{\|u_{exact}\|_{H^1}}$ \\
  \hline
$\epsilon=0.01$ & \revision{52.87} & \revision{86.47} \\ 
$\epsilon=0.001$ & \revision{57.36} & \revision{64.83} \\ 
$\epsilon=0.0001$ & \revision{57.70} & \revision{65.19} \\ 
  \hline
\end{tabular}
\caption{\revision{The second model problem, Eriksson-Johnson}. Numerical accuracy of the solution with the optimal test functions in $L_2$ setup on uniform grid.}
\label{tab:tab3}
\end{table}
\end{center}

\subsection{Galerkin/least\revision{-}squares formulation }

To overcome the issue we noted in Section~\ref{sec:petrov_optimal_eriksson} with overly diffusive solutions for small $\epsilon$ we now introduce a modified stabilization. In this case, we adjust the selection of the test fnctions inspired by the Galerkin/least squares method \cite{GLS}, and the AVS-FE method which adds weighted contributions of derivatives to the $L_2$ norm in the minimization process~\cite{avs}.
Let us show this choice on the model \revision{advection-diffusion} problem as we did before.


We take the strong form of the PDE $
\frac{\partial u}{\partial x}-\epsilon \left(\frac{\partial^2 u}{\partial x^2}+
\frac{\partial^2 u}{\partial y^2}\right)= 0$ and we test with $a+\frac{1}{h}b$:
 
\begin{equation}
\left(\frac{\partial u}{\partial x}-\epsilon \left(\frac{\partial^2 u}{\partial x^2}+
\frac{\partial^2 u}{\partial y^2}\right),a+\frac{1}{h} b \right)=0,
\end{equation}

\noindent where for $a$ we take the optimal test functions in $L_2$ setup, as derived in Section 2, and for $b$ we take the test functions equal to 0 on the boundary. 

\revision{We use the same basis functions and diffusion coefficients as in the previous section, namely, we define quadratic B-splines of $C^1$ continuity.} We remove the first and the last basis function, the two that are non-zero on the boundary on the domain.

\begin{equation}
\left(\frac{\partial u}{\partial x}-\epsilon \left(\frac{\partial^2 u}{\partial x^2}+
\frac{\partial^2 u}{\partial y^2}\right),a\right)+
\left(\frac{\partial u}{\partial x}-\epsilon \left(\frac{\partial^2 u}{\partial x^2}+
\frac{\partial^2 u}{\partial y^2}\right),\frac{1}{h} b \right)=0 
\end{equation}

\noindent We then integrate by parts the second term:

\begin{equation}
\left(\frac{\partial u}{\partial x}-\epsilon \left(\frac{\partial^2 u}{\partial x^2}+
\frac{\partial^2 u}{\partial y^2}\right),a \right)+\frac{1}{h}\left(\epsilon \frac{\partial u}{\partial x},\frac{\partial b}{\partial x}\right)+
\frac{1}{h}\left(\epsilon \frac{\partial u}{\partial y},\frac{\partial b}{\partial y}\right)+
\frac{1}{h}\left(\frac{\partial u}{\partial x},b\right)=0 +b.c.
\end{equation}


With this modified  Petrov-Galerkin formulation with stabilization, we perform numerical experiments to showcase its properties. 
We present numerical results in Figures \ref{fig:Improved01A}-\ref{fig:Improved01}.
In Tables \ref{tab:tab4A}-\ref{tab:tab4}, the resulting relative errors are summarized for these cases. 
\revision{Even for low value of the $\epsilon=0.003$ for the first model problem or $\epsilon=0.0001$ for the second model problem}, the solution has a good shape on the mesh with only ten uniform elements as shown in Figures~\ref{fig:Improved01A}-\ref{fig:Improved01}.

\begin{figure}
\centering
\includegraphics[scale=0.6]{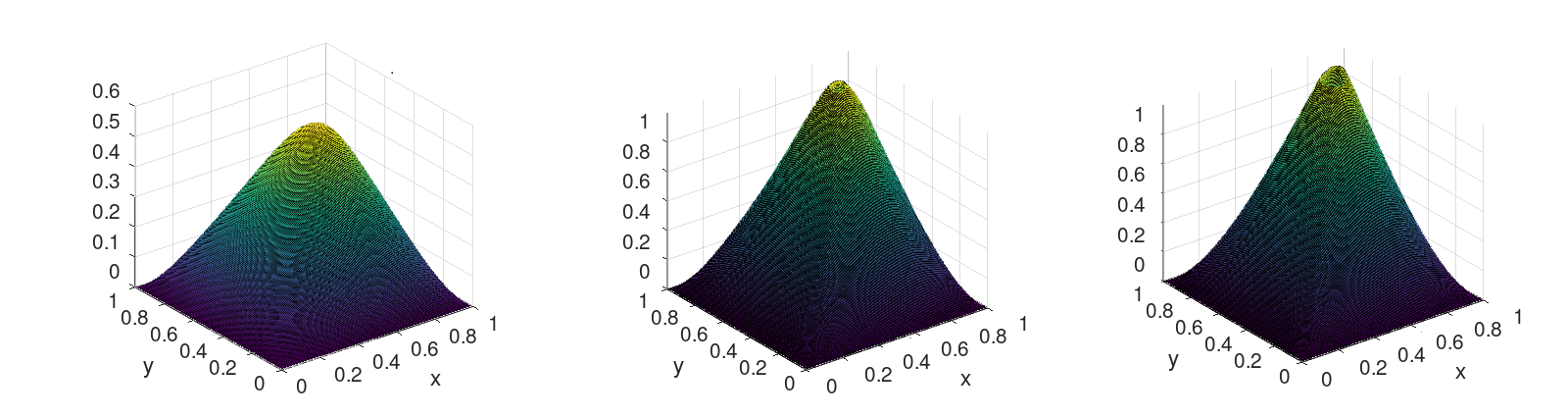}
\caption{\revision{The first model problem for $\epsilon=\{0.1, 0.01, 0.003\}$. The solution with the modified stabilization on a uniform mesh of ten elements.}
}
\label{fig:Improved01A}
\end{figure}

\begin{figure}
\centering
\includegraphics[scale=0.4]{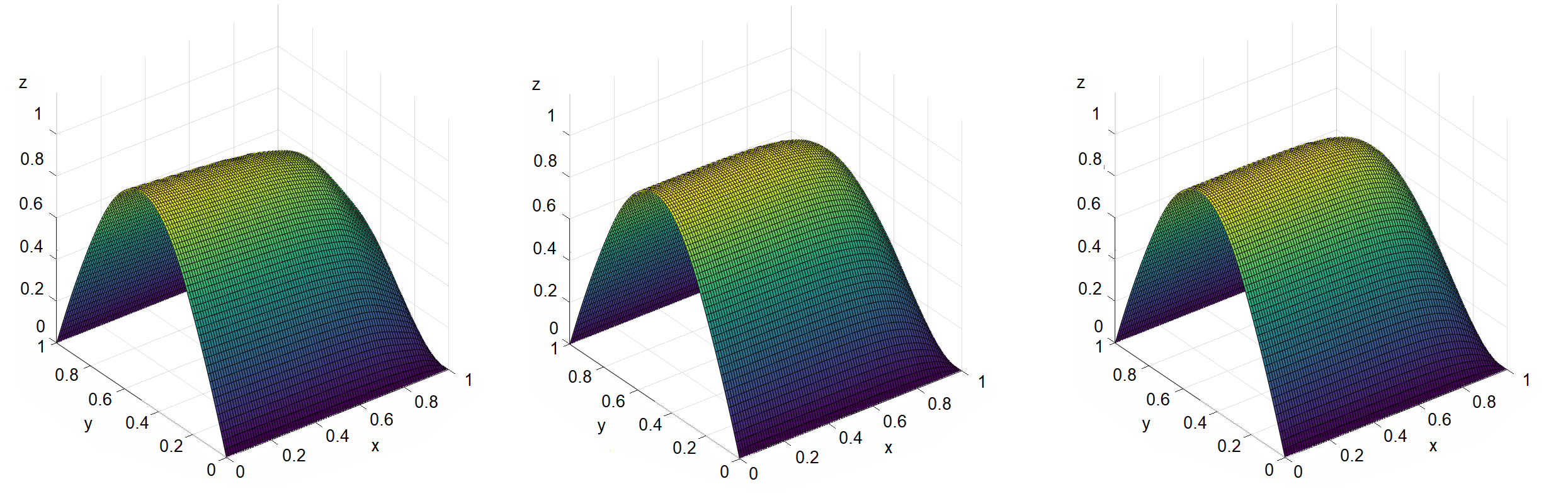}
\caption{\revision{The second model problem, Eriksson-Johnson for $\epsilon \in \{0.01,0.001,0.0001\}$. The solution with the modified stabilization on a uniform mesh of ten elements.}
}
\label{fig:Improved01}
\end{figure}

\begin{center}
\begin{table}
\begin{tabular}{  | c |c | c|} 
\hline
 $\epsilon$ &  100$\frac{\|u-u_{exact}\|_{L_2}}{\|u_{exact}\|_{L_2}}$ & 100$\frac{\|u-u_{exact}\|_{H^1}}{\|u_{exact}\|_{H^1}}$ \\
  \hline
$\epsilon=0.1$ & 1.64  & 4.73 \\ 
$\epsilon=0.01$ & 28.20  & 63.04 \\ 
$\epsilon=0.003$ &  38.15 & 135.54  \\ 
  \hline
  \end{tabular}
\caption{\revision{The first model problem. Numerical accuracy of solution with modified stabilization  on uniform grid of ten elements.}}
\label{tab:tab4A}
\end{table}
\end{center}

\begin{center}
\begin{table}
\begin{tabular}{  | c |c | c|} 
\hline
 $\epsilon$ &  100$\frac{\|u-u_{exact}\|_{L_2}}{\|u_{exact}\|_{L_2}}$ & 100$\frac{\|u-u_{exact}\|_{H^1}}{\|u_{exact}\|_{H^1}}$ \\
  \hline
$\epsilon=0.1$ & 0.91 & 3.11 \\ 
$\epsilon=0.01$ &  17.10 & 62.44  \\ 
$\epsilon=0.001$ &  22.17 & 71.33 \\ 
$\epsilon=0.0001$ &  22.38 & 71.46 \\ 
  \hline
  \end{tabular}
\caption{\revision{The second model problem,} Eriksson-Johnson problem. Numerical accuracy of solution with modified stabilization  on uniform grid of ten elements.}
\label{tab:tab4}
\end{table}
\end{center}

\subsection{The Streamline-Upwind Petrov-Galerkin method}

A commonly stabilization method is the SUPG method \cite{SUPG}.
We will explain it for the \revision{advection-diffusion}  model problems we consider.
In the SUPG method, we modify the discretized weak form by adding the residual term acting upon the discrete solution $u_h$ in the following way:

\begin{eqnarray}
\begin{aligned}
& \beta_x \left(\frac{\partial u_h}{\partial x},v_h\right)
+  \beta_y \left(\frac{\partial u_h}{\partial y},v_h\right)
+\epsilon \left(  \frac{\partial u_h}{\partial x}, \frac{\partial v_h}{\partial x}\right) +\epsilon \left(  \frac{\partial u_h }{\partial y}, \frac{\partial v_h}{\partial y}\right) \\
& +
(R(u_h),\tau \beta\cdot \nabla v_h)
 = 0 + b.c.
\label{eq:Eriksson_b_SUPG}
\end{aligned}
\end{eqnarray}
where $R(u)=\frac{\partial u}{\partial x}+\epsilon \Delta u$, and $\tau^{-1}=
\left(\frac{\beta_x}{h_x} + \frac{\beta_y}{h_y} \right) + 3\epsilon \frac{1}{h_x^2+h_y^2}$, where in our case we run the simulations for the diffusion parameter \revision{$\epsilon \in \{0.1,0.01,0.003,0.001,0.0001\}$}, and the convection vector $\beta = (\beta_x,\beta_y)$, and $h_x$ and $h_y$ are the horizontal and the vertical dimensions of an element. 

\subsubsection{Streamline Upwind Petrov Galerkin method on uniform grid}

We use the same uniform mesh of the previous section. 
The summary of the numerical results obtained with the SUPG method on this uniform mesh is presented in Table \ref{tab:tab5}. The shape of the numerical solutions for $\epsilon \in \{0.1,0.01,0.003, 0.001,0.0001 \}$, are presented in Figures \ref{fig:SUPG01A}-\ref{fig:SUPG01}.

\begin{center}
\begin{table}[h]
\begin{tabular}{  | c |c | c|} 
\hline
 $\epsilon$ &  100$\frac{\|u-u_{exact}\|_{L_2}}{\|u_{exact}\|_{L_2}}$ & 100$\frac{\|u-u_{exact}\|_{H^1}}{\|u_{exact}\|_{H^1}}$ \\
  \hline
$\epsilon=0.1$ & 3.57 & 7.81 \\ 
$\epsilon=0.01$ & 33.71  & 71.91  \\ 
$\epsilon=0.003$ & 46.13  & 120.14 \\ 
  \hline
  \end{tabular}
\caption{\revision{The first model problem. Numerical accuracy of solution with the SUPG stabilization on a uniform grid of ten elements.}}
\label{tab:tab5A}
\end{table}
\end{center}

\begin{center}
\begin{table}[h]
\begin{tabular}{  | c |c | c|} 
\hline
 $\epsilon$ &  100$\frac{\|u-u_{exact}\|_{L_2}}{\|u_{exact}\|_{L_2}}$ & 100$\frac{\|u-u_{exact}\|_{H^1}}{\|u_{exact}\|_{H^1}}$ \\
  \hline
$\epsilon=0.01$ &  20.88 & 68.46  \\ 
$\epsilon=0.001$ &  22.45 & 71.11 \\ 
$\epsilon=0.0001$ &  22.44 & 71.78 \\ 
  \hline
  \end{tabular}
\caption{\revision{The second model problem, Eriksson-Johnson}. Numerical accuracy of solution with the SUPG stabilization on a uniform grid of ten elements.}
\label{tab:tab5}
\end{table}
\end{center}

\begin{figure}
\centering
\includegraphics[scale=0.6]{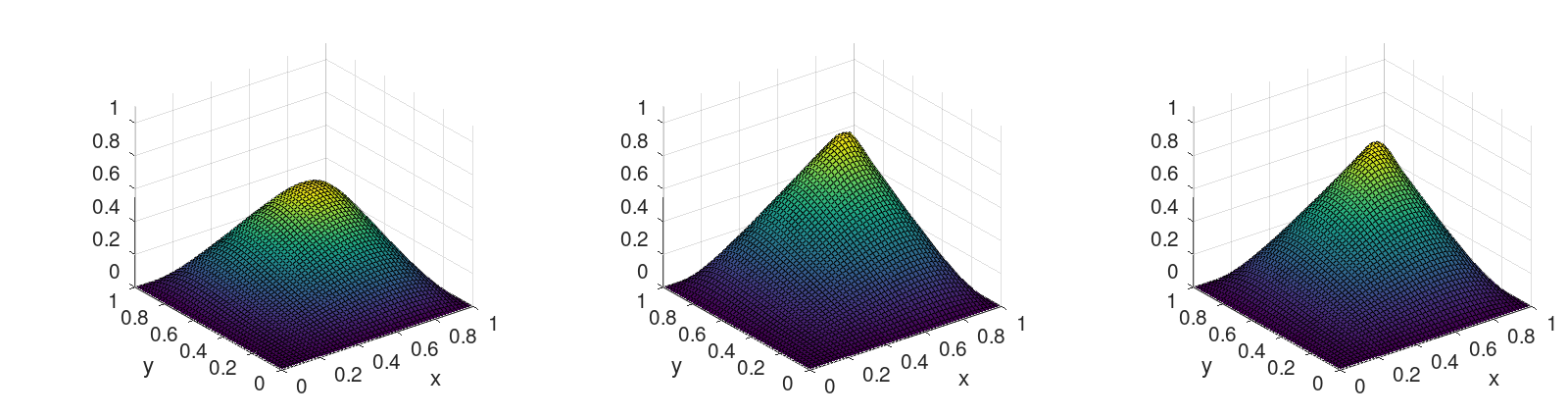}
\caption{\revision{The first model problem for $\epsilon \in \{0.1,0.001,0.003\}$. The solution with the SUPG stabilization on a uniform mesh of ten elements.}
}
\label{fig:SUPG01A}
\end{figure}

\begin{figure}
\centering
\includegraphics[scale=0.3]{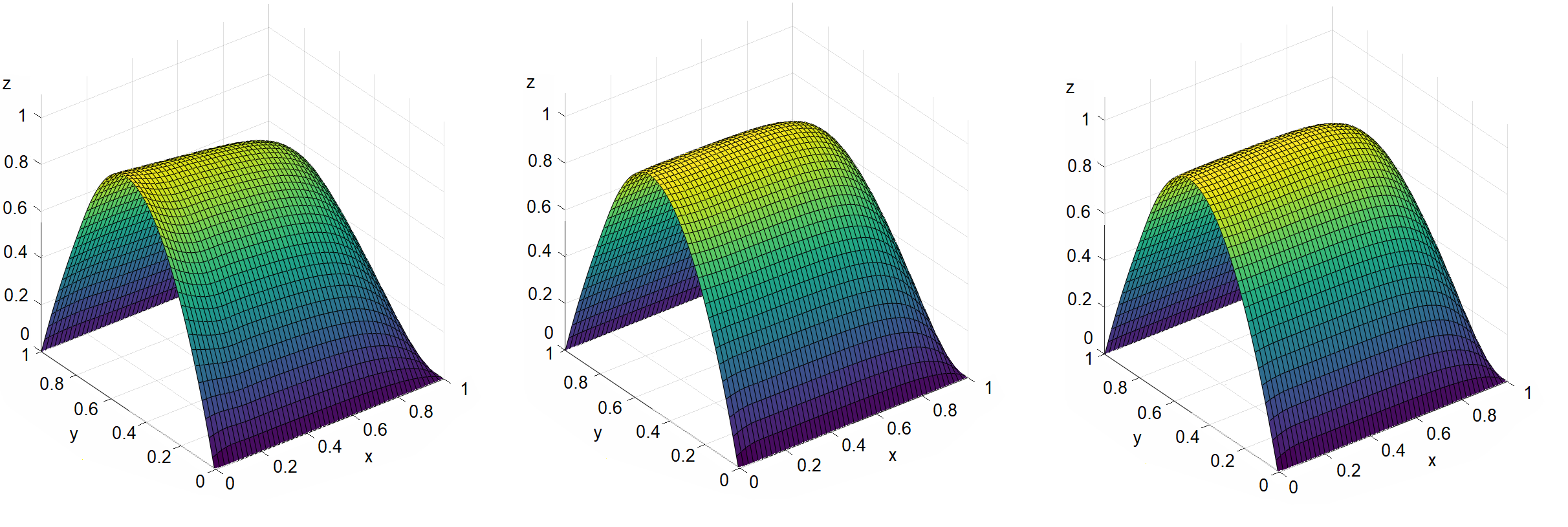}
\caption{\revision{The second model problem, Eriksson-Johnson for $\epsilon \in \{0.01, 0.001, 0.0001 \}$. The solution with the SUPG stabilization on a uniform mesh of  ten elements. }}
\label{fig:SUPG01}
\end{figure}

\section{\revision{Comparison of the methods}}

\begin{center}
\begin{table}[h]  
\begin{tabular}{  c|cc|cccccccc } 
\hline
& \multicolumn{2}{c}{Galerkin} & \multicolumn{2}{c}{Galerkin} & \multicolumn{2}{c}{Petrov-Galerkin} & 
\multicolumn{2}{c}{Galerkin} & \\
& \multicolumn{2}{c}{Refined} & \multicolumn{2}{c}{Uniform} & \multicolumn{2}{c}{Optimal} &
\multicolumn{2}{c}{Least-Sqaures} &
\multicolumn{2}{c}{SUPG} \\
\hline
\hline
 $\epsilon$ &  $L_2$ & $H_1$
 &   $L_2$ & $H_1$
 &   $L_2$ & $H_1$
 &  $L_2$ & $H_1$
 &  $L_2$ & $H_1$
 \\
  \hline
$0.1$ & 0.49 & 2.60 & \inred{0.6} & 4.69 & {17.27} & {16.61} & {1.64} & {4.73} & 3.57 & 7.81 \\ 
$0.01$ & 0.12 & 2.30 & 46.11 & 60.31 & {86.70} & {100.83} &  \inred{{28.20}}  & {63.04} & 33.71 & 71.91 \\ 
$0.003$ & 0.07 & 2.32 & 87.17 & 189.62 & {84.43}  & {100.80} & \inred{{38.15}}  & {135.54} & 46.13 & 120.14 \\ 
  \hline
  \end{tabular}
\caption{\revision{Comparison of the methods for the first model problem. $L_2=100\frac{\|u-u_{exact}\|_{L_2}}{\|u_{exact}\|_{L_2}}$,  $H_1=100\frac{\|u-u_{exact}\|_{H_1}}{\|u_{exact}\|_{H_1}}$}. }
\label{tab:comparison_first}
\end{table}
\end{center}

\begin{center}
\begin{table}[h]
\begin{tabular}{  c|cc|cccccccc } 
\hline
& \multicolumn{2}{c}{Galerkin} & \multicolumn{2}{c}{Galerkin} & \multicolumn{2}{c}{Petrov-Galerkin} & 
\multicolumn{2}{c}{Galerkin} & \\
& \multicolumn{2}{c}{Refined} & \multicolumn{2}{c}{Uniform} & \multicolumn{2}{c}{Optimal} &
\multicolumn{2}{c}{Least-Sqaures} &
\multicolumn{2}{c}{SUPG} \\
\hline
\hline
 $\epsilon$ &  $L_2$ & $H_1$
 &   $L_2$ & $H_1$
 &   $L_2$ & $H_1$
 &  $L_2$ & $H_1$ 
 &  $L_2$ & $H_1$ \\
  \hline
    $0.01$ & 0.3 & 2.30 &  \inred{13.48} & 70.44 & {52.87} & {86.47} & 17.10 & 62.44 & 20.88 & 68.46 \\ 
$0.001$ & 0.27 & 2.29 & 48.15 & 259.75 & {57.36} & {64.83} &  \inred{22.17} & 71.33 & 22.45 & 71.11 \\ $0.0001$ & 0.27 & 2.29 & 54.77 & 262.14 & {57.70} & {65.19} & \inred{22.38} & 71.46 & 22.44 & 71.78  \\ 
  \hline
  \end{tabular}
\caption{\revision{Comparison of the methods for the second problem, the Eriksson-Johnson model problem. 
$L_2=100\frac{\|u-u_{exact}\|_{L_2}}{\|u_{exact}\|_{L_2}}$,  $H_1=100\frac{\|u-u_{exact}\|_{H_1}}{\|u_{exact}\|_{H_1}}$}. }
\label{tab:comparison_second}
\end{table}
\end{center}

\revision{In this section, we compare all the methods on the uniform grids. Namely, we compare the standard Galerkin formulation, Petrov-Galerkin formulation with the optimal test functions, Galerkin/least-squares method, and the SUPG method. The comparison for the first problem is summarized in Table \ref{tab:comparison_first}. We can read that the Galerkin/least-squares method provides the best solutions in the $L_2$ norm for small values of $\epsilon$. 
 The comparison for the second problem is summarized in Table \ref{tab:comparison_second}. It also shows, that the Galerkin/least-squares method provides the best solutions in the $L_2$ norm for small values of $\epsilon$. The estimates in the $H_1$ norm, show that Petrov-Galerkin method or SUPG method can deliver slightly better accuracy solutions for both problems. Still, the quality of all the results is similar. }

\section{Stability Analysis}

To show the stability of our Galerkin/least\revision{-}squares method, we consider the two-dimensional advection-diffusion equation:
\begin{equation*}
\label{eq:exact-formulation}
\begin{aligned}
  -\varepsilon\Delta u + \beta \cdot \nabla u &= f \\
  \left.u\right|_{\partial\Omega} &= 0
\end{aligned}
\end{equation*}
where~$\varepsilon > 0$ is a constant diffusion coefficient, $\beta$ is constant advection vector
and $f$ is a prescribed source.
For simplicity, let us consider~$\Omega = (0, L)^2$.
We consider the following discretization using B-splines on a uniform grid with element size~$h$ \revision{(defined as an element diameter)}:
\begin{equation}
\label{eq:stabilized-formulation}
  b(u_h, v_h) = l(v_h) \quad \forall v_h \in U_h,
\end{equation}
where:
\begin{equation}
\begin{aligned}
  b(u_h, v_h) &=
    \frac{1}{h}\left( \varepsilon\Prod{\nabla u_h}{\nabla v_h}_\Omega + \Prod{\beta \cdot \nabla u_h}{v_h}_\Omega \right) \\&+
    \Prod{-\varepsilon \Delta u_h + \beta \cdot \nabla u_h}
    {-\varepsilon \Delta v_h + \beta \cdot \nabla v_h}_\Omega,
\end{aligned}
\end{equation}
and:
\begin{equation}
  l(v_h) = (f, v_h)_\Omega.
\end{equation}

\revision{The goal of this section is to prove that the above discrete formulation
is well-posed.
To this end, we will demonstrate that the bilinear form~$b$ is continuous
and coercive on~$U_h$ equipped with the scalar product
\begin{equation}
    \Prod{u_h}{v_h}_{U_h} = 
    \frac{\varepsilon}{h}\Prod{\nabla u_h}{\nabla v_h}_{L_2} + 
    \Prod{\beta \cdot \nabla u_h}{\beta \cdot \nabla v_h},
    \label{eq:scalara}
\end{equation}
with constants independent of the mesh size~$h$.}

\revision{To do this, first we need an inverse inequality from the following lemma.}
\begin{lemma}
\label{lem:inverse-ineq}
There exists a constant~$C>0$ independent of~$h$ such that:
\begin{equation}
  \Norm{\Delta v_h}_{L_2} \leq C h^{-1} \Norm{\nabla v_h}_{L_2} \quad \forall v_h \in U_h,
\end{equation}
\begin{proof}
   For B-splines of order~$p$ on~$(0, 1)^d$, we have:
   \begin{equation}
      \left|v_h\right|_{H^1} \leq 2\sqrt{3d}p^2 h^{-1}\Norm{v_h}_{L_2},
   \end{equation}
   (see \cite{splines}).
   Scaling to~$(0, L)^d$ does not change the above inequality, since
   for~$v_h(x) = \revision{\tilde{v}}_h(x/L)$, where~$\tilde{v}_h$ is a B-spline function
   defined on~$(0, 1)^d$ we have
   \begin{equation}
       \left|v_h\right|_{H^1} = L^{d/2-1}\left|\tilde{v}_h\right|_{H^1},
       \quad
       \Norm{v_h}_{L_2} = L^{d/2}\Norm{\tilde{v}_h}_{L_2},
       \quad
       h = \tilde{h} L,
   \end{equation}
   where~$\tilde{h}$ is the size of the element of a mesh scaled to~$(0, 1)$,
   and thus
   \begin{equation}
   \begin{aligned}
      \left|v_h\right|_{H^1} &= L^{d/2-1}\left|\tilde{v}_h\right|_{H^1}
      \\&\leq 
      L^{d/2-1} 2 \sqrt{3d}p^2 \tilde{h}^{-1} \Norm{\tilde{v}_h}_{L_2}
      \\&=
      2 \sqrt{3d}p^2 (\tilde{h}L)^{-1} L^{d/2}\Norm{\tilde{v}_h}_{L_2}
      \\&= 2\sqrt{3d}p^2 h^{-1}\Norm{v_h}_{L_2}
   \end{aligned}
   \end{equation}
   Consequently,
   \begin{equation}
   \begin{aligned}
      \Norm{\Delta v_h}_{L_2}^2 &\leq \left|\nabla v_h\right|_{H^1}^2 =
      \sum_{k=1}^d \left|\partial_k v_h\right|_{H^1}^2
      \\&\leq
      \sum_{k=1}^d \left(2\sqrt{3d}(p-1)^2 h^{-1}\right)^2\Norm{\partial_k v_h}_{L_2}^2
      \\&\leq
      \left(2\sqrt{3d}(p-1)^2 h^{-1}\right)^2 \Norm{\nabla v_h}_{L_2}^2
   \end{aligned}
   \end{equation}
   since partial derivatives of~$v_h$ are B-splines of order~$p-1$.
   Therefore, a constant with value $C = 2\sqrt{3d}(p-1)^2$ satisfies the statement of the lemma.
\end{proof}
\end{lemma}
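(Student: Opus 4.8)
The plan is to reduce the desired second-order inverse estimate to a known first-order ($L_2$-to-$H^1$) inverse inequality for B-splines, exploiting the fact that differentiating a B-spline simply lowers its polynomial order by one, so that the same family of estimates can be reapplied to the components of the gradient. First I would record the scalar inverse inequality for B-splines of order $p$ on the reference cube $(0,1)^d$, namely $|w|_{H^1} \leq 2\sqrt{3d}\,p^2\,h^{-1}\Norm{w}_{L_2}$, which is available from~\cite{splines}, and treat it as the single external ingredient.

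The first genuine step is to transfer this estimate from $(0,1)^d$ to the physical domain $(0,L)^d$ and verify that it is invariant under the dilation. Writing any $v_h$ on $(0,L)^d$ as $v_h(x) = \tilde{v}_h(x/L)$ with $\tilde{v}_h$ a B-spline on $(0,1)^d$, I would track how each quantity transforms: the $H^1$-seminorm picks up a factor $L^{d/2-1}$, the $L_2$-norm a factor $L^{d/2}$, and the element size satisfies $h = \tilde{h}\,L$. Substituting these into the reference estimate, the powers of $L$ cancel exactly, so $|v_h|_{H^1} \leq 2\sqrt{3d}\,p^2\,h^{-1}\Norm{v_h}_{L_2}$ holds verbatim on $(0,L)^d$. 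This part is routine bookkeeping, though care with the exponents of $L$ is essential.

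The heart of the argument is a bootstrapping step. I would first bound the Laplacian by the Hessian via $\Norm{\Delta v_h}_{L_2}^2 \leq |\nabla v_h|_{H^1}^2 = \sum_{k=1}^d |\partial_k v_h|_{H^1}^2$, which recasts the estimate in terms of the first-order seminorms of the gradient components. The crucial structural observation is then that each partial derivative $\partial_k v_h$ is again a B-spline, but of reduced order $p-1$ (a consequence of the tensor-product construction together with the differentiation rule for univariate B-splines in~(\ref{eq:bsplines})). Hence the scaled inverse inequality applies to each $\partial_k v_h$ with $p$ replaced by $p-1$, giving $|\partial_k v_h|_{H^1} \leq 2\sqrt{3d}\,(p-1)^2\,h^{-1}\Norm{\partial_k v_h}_{L_2}$. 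Summing over $k$ and using $\sum_k \Norm{\partial_k v_h}_{L_2}^2 = \Norm{\nabla v_h}_{L_2}^2$ collapses the right-hand side, and taking square roots yields the claim with $C = 2\sqrt{3d}\,(p-1)^2$, manifestly independent of $h$.

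The step I expect to be the main obstacle is the structural claim that differentiation lowers the B-spline order by one while keeping the function inside a (lower-order) spline space, since this is exactly what licenses reapplying the inverse inequality to the gradient components; everything else is mechanical once this is in place. The one subtlety worth flagging is the passage $\Norm{\Delta v_h}_{L_2}^2 \leq |\nabla v_h|_{H^1}^2$: pointwise this bounds the trace of the Hessian by its Frobenius norm, which in general carries a dimensional factor (so that rigorously one obtains $\Norm{\Delta v_h}_{L_2}^2 \leq d\,|\nabla v_h|_{H^1}^2$ unless cross terms are eliminated by integration by parts). Since the lemma only asserts the existence of some $h$-independent constant, such a factor is harmless and is simply absorbed into $C$.
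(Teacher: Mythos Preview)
Your proposal is correct and follows essentially the same route as the paper: cite the $L_2$-to-$H^1$ inverse inequality for B-splines, check its scale invariance under the dilation $(0,1)^d\to(0,L)^d$, bound $\Norm{\Delta v_h}_{L_2}^2$ by $|\nabla v_h|_{H^1}^2$, and then reapply the inverse inequality to each $\partial_k v_h$ at order $p-1$ to obtain $C=2\sqrt{3d}(p-1)^2$. Your added remark about the possible dimensional factor in the trace--Frobenius step is a valid caution that the paper glosses over; as you note, it is harmless for the existence of an $h$-independent constant.
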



\revision{Using this results, we can establish continuity of~$b$,
provided sufficiently small~$\varepsilon > 0$.}

\revision{
\begin{lemma}
\label{lem:continuity}
Assuming~$\varepsilon \leq \frac{1}{2}h/C^2$,
where~$C$ is the constant from Lemma~\ref{lem:inverse-ineq},
\begin{equation}
  b(u_h, v_h) \leq M \Norm{u_h}_{U_h} \Norm{v_h}_{U_h}
\end{equation}
for some constant~$M > 0$ independent of~$h$.
\begin{proof}
Expanding the definition of~$b$, we have
\begin{equation}
\begin{aligned}
  b(u_h, v_h) &=
  \frac{\varepsilon}{h}\Prod{\nabla u_h}{\nabla v_h}
  \\&+ \Prod{-\varepsilon \Delta u_h + \beta \cdot \nabla u_h}{- \varepsilon \Delta v_h + \beta \cdot \nabla v_h}
  \\&+ \frac{1}{h}\Prod{\beta \cdot \nabla u_h}{v_h}
\end{aligned}
\end{equation}
The first term can be bounded as
\begin{equation}
   \frac{\varepsilon}{h}\Prod{\nabla u_h}{\nabla v_h}
   \leq
   \frac{\varepsilon}{h}\Norm{\nabla u_h}\Norm{\nabla v_h}
   \leq 
   \Norm{u_h}_{U_h} \Norm{v_h}_{U_h}.
\end{equation}
For the middle term, note that by Lemma~\ref{lem:inverse-ineq}:
  \begin{equation}
    \Norm{\varepsilon\Delta u_h}_{L_2} 
    =
    \varepsilon\Norm{\Delta u_h}_{L_2} 
    \leq \varepsilon Ch^{-1}\Norm{\nabla u_h}_{L_2},
  \end{equation}
and using the assumption on~$C$ we get
  \begin{equation}
  \label{eq:laplacian-inequality}
    \Norm{\varepsilon\Delta u_h}_{L_2}^2 \leq
    \frac{\varepsilon}{h} \left(\varepsilon C^2 h^{-1}\right)\Norm{\nabla u_h}_{L_2}^2
    \leq \frac{1}{2} \frac{\varepsilon}{h}\Norm{\nabla u_h}_{L_2}^2,
  \end{equation}
%
so that
\begin{equation*}
\begin{aligned}
  \frac{1}{2}\Norm{-\varepsilon\Delta u_h + \beta \cdot \nabla u_h}_{L^2}^2 
  &\leq
  \Norm{\varepsilon\Delta u_h}_{L^2}^2 + \Norm{\beta \cdot \nabla u_h}_{L^2}^2
  \\&\leq
  \frac{1}{2} \frac{\varepsilon}{h}\Norm{\nabla u_h}_{L^2}^2
  + \Norm{\beta \cdot \nabla u_h}_{L^2}^2
  \\&\leq
  \Norm{u_h}_{U_h}^2,
\end{aligned}
\end{equation*}
which shows that
\begin{equation*}
    \Prod{-\varepsilon \Delta u_h + \beta \cdot \nabla u_h}
    {- \varepsilon \Delta v_h + \beta \cdot \nabla v_h} 
    \leq 2 \Norm{u_h}_{U_h} \Norm{v_h}_{U_h}
\end{equation*}
Finally, to bound the last term, by Poincar\'e inequality there
exists a constant~$K > 0$ (depending only on~$\Omega$), such that
\begin{equation*}
  \Norm{u_h} \leq K \Norm{\nabla u_h}
\end{equation*}
for all~$u_h \in U_h$.
Therefore,
\begin{equation*}
\begin{aligned}
  \frac{1}{h}\Prod{\beta \cdot \nabla u_h}{v_h}
  &\leq
  \Par{h^{-1/2}\Norm{\beta \cdot \nabla u_h}}\,
  \Par{h^{-1/2}\Norm{v_h}_{L^2}}
  \\&\leq
  \varepsilon^{-1}
  \Par{
    |\beta| \varepsilon^{1/2}h^{-1/2}\Norm{\nabla u_h}_{L^2}
  }
  \Par{
  \varepsilon^{1/2}h^{-1/2} K \Norm{\nabla v_h}_{L^2}
  }
  \\&\leq
  \frac{K|\beta|}{\varepsilon}
  \Norm{u_h}_{U_h} \Norm{v_h}_{U_h}.
\end{aligned}
\end{equation*}
\end{proof}
\end{lemma}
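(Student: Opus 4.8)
The plan is to expand $b(u_h,v_h)$ into its three natural pieces and bound each one separately by a fixed multiple of $\Norm{u_h}_{U_h}\Norm{v_h}_{U_h}$: the scaled Dirichlet term $\frac{\varepsilon}{h}\Prod{\nabla u_h}{\nabla v_h}$, the least-squares term $\Prod{-\varepsilon\Delta u_h+\beta\cdot\nabla u_h}{-\varepsilon\Delta v_h+\beta\cdot\nabla v_h}$, and the scaled advection term $\frac{1}{h}\Prod{\beta\cdot\nabla u_h}{v_h}$. The first piece is immediate: I would apply Cauchy--Schwarz to write $\frac{\varepsilon}{h}\Prod{\nabla u_h}{\nabla v_h}\le\bigl(\sqrt{\varepsilon/h}\,\Norm{\nabla u_h}\bigr)\bigl(\sqrt{\varepsilon/h}\,\Norm{\nabla v_h}\bigr)$ and then note that each factor is dominated by the corresponding $U_h$-norm, since by \eqref{eq:scalara} one has $\Norm{w_h}_{U_h}^2=\frac{\varepsilon}{h}\Norm{\nabla w_h}^2+\Norm{\beta\cdot\nabla w_h}^2\ge\frac{\varepsilon}{h}\Norm{\nabla w_h}^2$.

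For the least-squares piece the strategy is to absorb the second-order contribution into the first-order one using the inverse inequality of Lemma~\ref{lem:inverse-ineq}. From $\Norm{\varepsilon\Delta u_h}\le\varepsilon Ch^{-1}\Norm{\nabla u_h}$ together with the standing hypothesis $\varepsilon\le\tfrac12 h/C^2$, I would deduce $\Norm{\varepsilon\Delta u_h}^2\le\tfrac12\frac{\varepsilon}{h}\Norm{\nabla u_h}^2$; combined with the elementary inequality $\Norm{a+b}^2\le 2\Norm{a}^2+2\Norm{b}^2$ this gives $\Norm{-\varepsilon\Delta u_h+\beta\cdot\nabla u_h}^2\le 2\Norm{u_h}_{U_h}^2$, and one further Cauchy--Schwarz then bounds the term by $2\Norm{u_h}_{U_h}\Norm{v_h}_{U_h}$. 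This is precisely the step where the smallness assumption on $\varepsilon$ is needed, and it is the only place it is used.

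The hard part will be the advection term $\frac{1}{h}\Prod{\beta\cdot\nabla u_h}{v_h}$, because the $U_h$-norm carries no bare $L_2$ control of $v_h$, so a direct Cauchy--Schwarz does not close. My plan is to split $h^{-1}=h^{-1/2}\cdot h^{-1/2}$, apply Cauchy--Schwarz in the form $\bigl(h^{-1/2}\Norm{\beta\cdot\nabla u_h}\bigr)\bigl(h^{-1/2}\Norm{v_h}\bigr)$, and then trade $\Norm{v_h}$ for $\Norm{\nabla v_h}$ via the Poincar\'e inequality $\Norm{v_h}\le K\Norm{\nabla v_h}$ (legitimate since $U_h\subset H^1_0(\Omega)$). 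Inserting compensating powers of $\varepsilon$ so as to recognize the combinations $\sqrt{\varepsilon/h}\,\Norm{\nabla u_h}$ and $\sqrt{\varepsilon/h}\,\Norm{\nabla v_h}$, the term is bounded by $\tfrac{K|\beta|}{\varepsilon}\Norm{u_h}_{U_h}\Norm{v_h}_{U_h}$. Summing the three estimates yields the claim with $M=3+K|\beta|/\varepsilon$. The constant is independent of $h$ (it does blow up as $\varepsilon\to0$, but $\varepsilon$ is fixed data, and $h$-independence is all that the subsequent well-posedness argument requires). The only point to watch is that the Poincar\'e and inverse-inequality constants are genuinely $h$-independent — the latter is furnished by Lemma~\ref{lem:inverse-ineq}, the former by the fixed domain $\Omega$.
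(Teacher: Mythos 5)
Your proposal is correct and follows essentially the same route as the paper's proof: the same three-way splitting of $b$, the same Cauchy--Schwarz bound on the Dirichlet term, the same absorption of $\Norm{\varepsilon\Delta u_h}_{L_2}$ via Lemma~\ref{lem:inverse-ineq} and the hypothesis $\varepsilon \leq \tfrac{1}{2}h/C^2$ for the least-squares term, and the same Poincar\'e-plus-$\varepsilon$-compensation trick for the advection term, yielding the same constant $M = 3 + K|\beta|/\varepsilon$, independent of $h$ but not of $\varepsilon$, exactly as in the paper.
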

}

\begin{theorem}
\label{thm:bounded-below}
    Assuming~$\varepsilon \leq \frac{1}{2}h/C^2$,
    where~$C$ is the constant from Lemma~\ref{lem:inverse-ineq},
    we have:
    \begin{equation}
      b(v_h, v_h) \geq \frac{1}{2}\left(
        \frac{\varepsilon}{h}\Norm{\nabla v_h}_{L_2}^2 + 
        \Norm{\beta \cdot \nabla v_h}_{L_2}^2
      \right) \quad \forall v_h \in U_h,
    \end{equation}
\end{theorem}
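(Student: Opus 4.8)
The plan is to compute $b(v_h,v_h)$ explicitly, discard the skew-symmetric convective contribution, and then use the inverse estimate from Lemma~\ref{lem:inverse-ineq} to absorb the Laplacian term. Expanding the bilinear form on the diagonal gives
\[
  b(v_h,v_h) = \frac{\varepsilon}{h}\Norm{\nabla v_h}_{L_2}^2 + \frac{1}{h}\Prod{\beta\cdot\nabla v_h}{v_h}_\Omega + \Norm{-\varepsilon\Delta v_h + \beta\cdot\nabla v_h}_{L_2}^2 .
\]
Since $\beta$ is constant and every $v_h\in U_h$ vanishes on $\partial\Omega$, integration by parts yields $\Prod{\beta\cdot\nabla v_h}{v_h}_\Omega = \tfrac12\int_\Omega \beta\cdot\nabla(v_h^2)\,dx = \tfrac12\int_{\partial\Omega} v_h^2\,(\beta\cdot n)\,ds = 0$, so the middle term drops out and $b(v_h,v_h) = \frac{\varepsilon}{h}\Norm{\nabla v_h}_{L_2}^2 + \Norm{-\varepsilon\Delta v_h + \beta\cdot\nabla v_h}_{L_2}^2$.

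Next I would bound the remaining least-squares term from below. From the algebraic identity $2\Norm{a - \tfrac12 b}_{L_2}^2 = 2\Norm{a}_{L_2}^2 - 2\Prod{a}{b} + \tfrac12\Norm{b}_{L_2}^2 \geq 0$ applied with $a = \varepsilon\Delta v_h$ and $b = \beta\cdot\nabla v_h$ one obtains
\[
  \Norm{-\varepsilon\Delta v_h + \beta\cdot\nabla v_h}_{L_2}^2 \;\geq\; \tfrac12\Norm{\beta\cdot\nabla v_h}_{L_2}^2 - \Norm{\varepsilon\Delta v_h}_{L_2}^2 .
\]
Then I would invoke inequality~(\ref{eq:laplacian-inequality}) --- which follows from Lemma~\ref{lem:inverse-ineq} together with the standing hypothesis $\varepsilon \leq \tfrac12 h/C^2$ --- to get $\Norm{\varepsilon\Delta v_h}_{L_2}^2 \leq \tfrac12\,\tfrac{\varepsilon}{h}\Norm{\nabla v_h}_{L_2}^2$. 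Substituting,
\[
  b(v_h,v_h) \geq \tfrac{\varepsilon}{h}\Norm{\nabla v_h}_{L_2}^2 - \tfrac12\,\tfrac{\varepsilon}{h}\Norm{\nabla v_h}_{L_2}^2 + \tfrac12\Norm{\beta\cdot\nabla v_h}_{L_2}^2 = \tfrac12\Par{\tfrac{\varepsilon}{h}\Norm{\nabla v_h}_{L_2}^2 + \Norm{\beta\cdot\nabla v_h}_{L_2}^2},
\]
which is the claim.

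There is no genuinely hard step here; the two points that need care are the cancellation of the convective term (which relies on $U_h\subset H^1_0(\Omega)$ and $\nabla\cdot\beta=0$, i.e.\ that the homogeneous Dirichlet condition is built into the spline space) and the choice of the weight $\tfrac12$ in the elementary inequality $2\Norm{a-\tfrac12 b}_{L_2}^2\geq 0$, which is tuned precisely to the factor $\tfrac12$ supplied by~(\ref{eq:laplacian-inequality}); a different split still gives coercivity but with a less tidy constant. Combined with the continuity Lemma~\ref{lem:continuity} and the observation that the right-hand side above equals $\Norm{v_h}_{U_h}^2$ from~(\ref{eq:scalara}), this bound establishes well-posedness of~(\ref{eq:stabilized-formulation}) via the Lax--Milgram theorem on $U_h$.
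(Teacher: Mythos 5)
Your proof is correct and follows essentially the same route as the paper's: cancel the convective term by integration by parts using the homogeneous boundary condition, absorb the cross term $-2\Prod{\varepsilon\Delta v_h}{\beta\cdot\nabla v_h}$ with weights $(2,\tfrac12)$ (your completing-the-square identity is exactly the paper's Cauchy--Schwarz plus Young with $\alpha=2$), and then apply the inverse-inequality bound $\Norm{\varepsilon\Delta v_h}_{L_2}^2\le\tfrac12\tfrac{\varepsilon}{h}\Norm{\nabla v_h}_{L_2}^2$ from the hypothesis on $\varepsilon$. Only a cosmetic slip in your closing remark: the right-hand side of the bound is $\tfrac12\Norm{v_h}_{U_h}^2$, not $\Norm{v_h}_{U_h}^2$, which does not affect the coercivity conclusion.
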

\begin{proof}
  We have:
  \begin{equation}
  \label{eq:b-vh-vh}
  \begin{aligned}
    b(v_h, v_h) &= \frac{1}{h}\left(
      \varepsilon\Norm{\nabla v_h}_{L_2}^2 + \Prod{\beta \cdot \nabla v_h}{v_h}
    \right)
    \\&+ \Norm{\varepsilon\Delta v_h}_{L_2}^2 + \Norm{\beta\cdot \nabla v_h}_{L_2}^2
    - 2\Prod{\varepsilon \Delta v_h}{\beta\cdot \nabla v_h}
  \end{aligned}
  \end{equation}
  The term~$\Prod{\beta \cdot \nabla v_h}{v_h}$ vanishes, since:
  \begin{equation}
  \begin{aligned}
    \Prod{\beta \cdot \nabla v_h}{v_h} &= \int_{\Omega}v_h(\beta\cdot \nabla v_h)\,dx
    = \int_{\Omega}v_h \nabla\cdot(\beta v_h)\,dx \\&=
    -\int_{\Omega}\nabla {v_h} \cdot (\beta {v_h})\,dx + \int_{\partial \Omega} v_{H_1} \beta \cdot dn
    \\&=-\Prod{\beta \cdot \nabla v_h}{v_h}
  \end{aligned}
  \end{equation}
  as~$v_h = 0$ on~$\partial\Omega$, i.e., a skew-symmetry condition.
  Furthermore, for any~$\alpha > 0$ we have:
  \begin{equation}
  \begin{aligned}
    2\left|\Prod{\varepsilon\Delta v_h}{\beta\cdot \nabla v_h}\right|
    &\leq
    2 \Norm{\varepsilon\Delta v_h}_{L_2} \Norm{\beta \cdot \nabla v_h}_{L_2}
    \\&\leq
    \alpha \Norm{\varepsilon\Delta v_h}_{L_2}^2 +
    \frac{1}{\alpha} \Norm{\beta \cdot \nabla v_h}_{L_2}^2
  \end{aligned}
  \end{equation}
  using Cauchy-Schwartz and Young's inequalities.
  In particular, for~$\alpha = 2$ we have:
  \begin{equation}
  2\left|\Prod{\varepsilon\Delta v_h}{\beta\cdot \nabla v_h}\right|
  \leq
  2 \Norm{\varepsilon\Delta v_h}_{L_2}^2 +
  \frac{1}{2} \Norm{\beta \cdot \nabla v_h}_{L_2}^2
  \end{equation}
  Combining this with equation~\eqref{eq:b-vh-vh}, we get:
  \begin{equation}
     b(v_h, v_h) \geq \frac{\varepsilon}{h}\Norm{\nabla v_h}_{L_2}^2
     - \Norm{\varepsilon\Delta v_h}_{L_2}^2 + \frac{1}{2}\Norm{\beta\cdot \nabla v_h}_{L_2}^2.
  \end{equation}
  %
  %
 \revision{Using the inequality~\eqref{eq:laplacian-inequality}
  \begin{equation}
    \Norm{\varepsilon\Delta v_h}_{L_2}^2
    \leq \frac{1}{2} \frac{\varepsilon}{h}\Norm{\nabla v_h}_{L_2}^2,
  \end{equation}
  established as part of the proof of continuity,
  }
  we arrive at the desired result:
  \begin{equation}
    b(v_h, v_h) \geq \frac{1}{2}\left(
      \frac{\varepsilon}{h}\Norm{\nabla v_h}_{L_2}^2 + 
      \Norm{\beta \cdot \nabla v_h}_{L_2}^2
    \right)
  \end{equation}
\end{proof}

We omit proofs for continuity of the bilinear and linear forms as this trivially follows using the Cauchy-Schwarz inequality. 
\revision{Together, all these properties guarantee stability of the formulation~\cite{babuska-inf-sup}.}

\section{Conclusions}

In this paper, we have presented an isogeometric analysis based least-squares stabilization in which we use an exact formula for the optimal test functions. This follows from the selection of the $L_2$ norm in the minimization of the residual. For this setup we present numerical experiments showing the stability properties and note that the resulting solutions are overly diffused in the case of coarse uniform meshes. To alleviate this issue, we derived Galerkin/least squares formulation, introducing combined test functions.
For the Galerkin/least squares we performed theoretical analysis. 

\revision{The motivation of our stabilization method is to modify the optimal test functions employed by the Petrov-Galerkin formulation with the optimal test functions, following the first term in the proof of the inf-sub stability in equation (\ref{eq:b-vh-vh}).
This first term allows us to obtain the inf-sub stability in the weighted graph norm defined by (\ref{eq:scalara}).
For the optimal test functions without our extra term, the inf-sub stability can be proven only in the $L_2$ norm, so the convergence of the error happens only in the $L_2$ norm, which does not see the oscillations of the solution.
Thus, the intuition behind this definition is to make it possible to prove the inf-sub stability in a better norm, the weighted graph norm.}

We compare the numerical results for 
\revision{two model advection-diffusion problems}
solved using ten elements \revision{on} a uniform mesh. The obtained solutions with optimal test functions computed from residual minimization in $L_2$ norm, presented in Table\revision{s~\ref{tab:tab3A} and~\ref{tab:tab3}}, with the modified stabilization using modified test functions, shown in Table\revision{s~\ref{tab:tab4A} and~\ref{tab:tab4}}, and finally with the SUPG method, in Table\revision{s~\ref{tab:tab5A} and~\ref{tab:tab5}}.
The best quality of the numerical results is obtained from the modified Galerkin / least\revision{-}squares formulation.
The optimal test functions computed with $L_2$ norm generally behave better than SUPG stabilization for $\epsilon=0.1$ or $\epsilon=0.01$. 
For small $\epsilon=0.001$ or $\epsilon=0.0001$, the SUPG and the modified stabilization deliver similar quality results, and the optimal test functions computed with $L_2$ norm have the largest errors. We attribute this to its overly diffusive nature for small $\epsilon$.
The Galerkin/least\revision{-}squares combined stabilization method  
delivers similar and sometimes better quality results than the SUPG method.  It may be an attractive alternative to stabilize numerical simulations as the proposed method requires no tuning of stabilization parameters thereby easing implementation for new cases. 

\section{Acknowledgements}

Research project supported by the program ``Excellence initiative - research university" for the AGH University of Science and Technology.

\end{document}